\newtheorem{theorem}{Theorem}
\newtheorem{lemma}[theorem]{Lemma}
\newtheorem{example}{Example}
\def\lcm{\mathrm{lcm}}
\def\tr{\mathrm{Tr}}
\def\qi#1 {\fbox {\footnote {\ }}\ \footnotetext { From Qi: {\color{red}#1}}}
\begin{document}
\title[Infinite families of $2$-designs from binary cyclic codes]{Infinite families of $2$-designs from two classes of binary cyclic codes with three nonzeros}

\author{Xiaoni Du}
\address{College of Mathematics and Statistics, Northwest Normal University, Lanzhou, Gansu 730070, China}
\curraddr{}
\email{ymldxn@126.com}
\author{Rong Wang}
\address{College of Mathematics and Statistics, Northwest Normal University, Lanzhou, Gansu 730070, China}
\curraddr{}
\email{rongw113@126.com}
\author{Chunming Tang}
\address{School of Mathematics and Information, China West Normal University, Nanchong, Sichuan 637002, China}
\curraddr{}
\email{tangchunmingmath@163.com}
\author{Qi Wang}
\address{Department of Computer Science and Engineering, Southern University of Science and Technology, Shenzhen, Guangdong 518055, China}
\curraddr{}
\email{wangqi@sustech.edu.cn}
%\thanks{Qi Wang's research was supported by the National Natural Science Foundation of China under Grant No. 11601220.}

\thanks{}
\date{\today}

\maketitle

\begin{abstract}
Combinatorial $t$-designs have been an interesting topic in combinatorics for decades. It is a basic fact that  the codewords of a fixed weight in a code may hold a $t$-design. Till now only a small amount of work on constructing $t$-designs from codes has been done. In this paper, we determine the weight distributions of two classes of cyclic codes: one related to the triple-error correcting binary BCH codes, and the other related to the cyclic codes with parameters satisfying the
generalized Kasami case, respectively. We then obtain infinite families of $2$-designs from these codes by proving that they are both affine-invariant codes, and explicitly determine their parameters. In particular, the codes derived from the dual of binary BCH codes hold five $3$-designs when $m=4$.

\noindent\textbf{Keywords:} Affine-invariant code, BCH code, cyclic code, linear code, weight distribution, $2$-design
\end{abstract}

\section{Introduction}
Let $\mathcal{P}$ be a set of $v \geq1$ elements and $\mathcal{B}$ be a set of $k$-subsets of $\mathcal{P},$ where $k$ is a positive integer with $1\leq k \leq v$. Let $t$ be a positive integer with $t\leq k.$ If every $t$-subset of $\mathcal{P}$ is contained in exactly $\lambda$ elements of $\mathcal{B}$, then we call the pair $\mathbb{D}=(\mathcal{P}, \mathcal{B})$ a $t$-$(v,k,\lambda)$ {\em design}, or simply a {\em $t$-design}. The elements of $\mathcal{P}$ are called
{\em points}, and those of $\mathcal{B}$ are referred to as {\em blocks}. We often denote the number of blocks by $b$ and a $t$-design is simple when there is no repeated blocks in $\mathcal{B}$. A $t$-design is called {\em symmetric} if $v=b$ and {\em trivial} if $k=t$ or $k=v$. Throughout this paper we study only simple $t$-designs with $t < k < v$. When $t\geq 2$ and $\lambda=1$, a $t$-design is called a {\em Steiner system}. Clearly, the parameters of a $t$-$(v,k,\lambda)$ design are restricted by the following identity. 
\begin{equation}\label{condition}
  b {k \choose t} = \lambda {v \choose t}.
\end{equation}
%A $t$-$(v,k,\lambda)$ design is also an $i$-$(v,k,\lambda_i)$ design for any $0\leq i< t,$ where
%\begin{eqnarray}\label{condition}
%\lambda_i=\frac{\lambda{{v-i} \choose {t-i}}}{{ {k-i} \choose {t-i}}}.
%\end{eqnarray}

The interplay between codes and $t$-designs has been ongoing for decades. On  one hand, the incidence matrix of a $t$-design over any finite field can serve as a generator matrix of a linear code and much progress has been made (see~\cite{AK92,Ding15b,KP95,KP03,Ton98,Ton07}). On the other hand, linear and nonlinear codes may hold $t$-designs. As a classical example, $4$-designs and $5$-designs with certain parameters were derived from binary and ternary Golay codes. Recently, Ding and Li~\cite{DL17} obtained infinite families of $2$-designs from $p$-ary Hamming codes, ternary projective cyclic codes, binary codes with two zeros and their duals. They also obtained $3$-designs from the extended codes of these codes and RM codes. More recently, infinite families of $2$-designs and $3$-designs from some classes of binary linear codes with five weights were given by Ding ~\cite{Ding182}. For other constructions of $t$-designs, for example, see~\cite{BJL99,CM06,MS771,RR10}.

The objective of this paper is to construct $2$-designs from two classes of cyclic codes obtained from the triple-error correcting narrow-sense primitive BCH codes and the cyclic codes related to the generalized Kasami case, respectively. In the following, we will first present the weight distributions of these two classes of cyclic codes, and then explicitly determine the parameters of the derived $2$-designs.

\section{The classical construction of $t$-designs from affine-invariant codes }

Throughout this paper, let $p=2$, $m=2s$, $\gcd(s, l)=d$ and $\gcd(s+l, 2l)=d'$, where both $s\geq 2$ and $1\leq l\leq m-1$ are positive integers with $l\neq s$. Let
$\mathbb{F}_q$ denote the finite field with $q=2^m$ elements and
$\mathbb{F}^*_q=\mathbb{F}_q\backslash\{0\}$. An $[n,k,d]$ {\em linear code }
  $\mathcal{C}$ over $\mathbb{F}_2$ is a $k$-dimensional subspace of $\mathbb{F}_2^n$
with minimum Hamming distance $d$, and is {\em cyclic} if any cyclic shift of a codeword is another codeword of $\mathcal{C}$.
 Any
 cyclic code $\mathcal{C}$ can be expressed as $\mathcal{C} = \langle g(x) \rangle,$
 where $g(x)$ is monic and has the least degree. The polynomial  $g(x)$ is called the {\em generator polynomial}  and $h(x)=(x^n-1)/g(x)$ is referred to
 as the
{\em  parity-check polynomial}  of $\mathcal{C}$.  If the generator polynomial
$g(x)$ (resp. the parity-check polynomial $h(x)$) can be factored
into a product of $r$ irreducible polynomials over $\mathbb{F}_p$, then
$\mathcal{C}$ is called a cyclic code with {\em $r$ zeros}  (resp. {\em $r$ nonzeros}).
The code with the generator polynomial $x^kh(x^{-1})$ is called the {\em dual} of $\mathcal{C}$ and denoted by
$\mathcal{C}^{\bot}$.

Furthermore, we define the {\em extended} code  of a code $\mathcal{C}$ to be
the code
$$
\overline{\mathcal{C}}=\{(c_0, c_1, \ldots, c_n) \in \mathbb{F}_2^{n+1}:(c_0, c_1, \ldots, c_{n-1}) \in \mathcal{C} ~with ~\sum^n_{i=0}c_i=0\}.
$$ 
The
 {\em support} of a codeword $\mathbf{c}$ is defined by
$$Suppt(\mathbf{c})=\{0\leq i \leq n-1: c_i\neq 0\}.$$
Let $A_i$ be the number of codewords with Hamming weight $i$ in a code $\mathcal{C}$. The   {\em weight enumerator} of $\mathcal{C}$ is defined by
$$1+A_1z+A_2z^2+\ldots+A_nz^n.$$
The sequence $(1, A_1, \ldots, A_n)$ is called the  {\em weight distribution} of the code $\mathcal{C}.$ If $|\{1\leq i\leq n: A_i\neq 0\}|=w,$ then we
call
$\mathcal{C}$ a  {\em $w$-weight code}.

Let $n=q-1$, and $\alpha$ be a generator of $\mathbb{F}^*_q$. For any $i$ with $0\leq i
\leq n-1$, let $M_i(x)$  denote the {\em minimal polynomial} of $\alpha^i$ over $\mathbb{F}_2$. For any
$2\leq\delta\leq n$, the code $\mathcal{C}_{(p, n, \delta)}= \langle g_{(p, n, \delta, 1)} \rangle$ with
$$g_{(p, n, \delta,
1)}(x) = \lcm(M_1(x), M_{2}(x), \ldots,  M_{1+\delta-2}(x)),$$
where $\lcm$ denotes the least common multiple of the polynomials,   is called a  {\em narrow-sense primitive BCH code} with designed distance $\delta$.

For each $i$ with $A_i\neq 0$, let $\mathcal{B}_i$ denote the set of the supports of all codewords with weight $i$ in $\mathcal{C}$, where the coordinates
of a codeword are indexed by $(0, 1, 2, \ldots, n-1).$
Let $\mathcal{P}=\{0, 1, \ldots, n-1\}.$ The pair $(\mathcal{P},\mathcal{B}_i)$ could be a $t$-$(v,i,\lambda)$ design for a certain positive $\lambda$~\cite{Ton98}. There exist two classical approaches to obtain $t$-designs from linear codes. The first one is to employ the Assmus-Mattson Theorem given in~\cite{AM69}, and the second one is to  study the automorphism group of a linear code $\mathcal{C}$. If the permutation part of the automorphism group acts $t$-transitively on a code $\mathcal{C}$, then the code $\mathcal{C}$ holds $t$-designs~\cite{AK92,MS771}. In the following, we will use the latter method to construct $2$-designs.

We conclude this section by summarizing some known results on affine-invariant codes related to  $2$-designs.

The $2$-adic expansion of each $e\in\mathcal{P}$ is given by
$$
e=\sum^{m-1}_{i=0}e_i2^i,~ ~0\leq e_i\leq 1 ,~0\leq i \leq m-1.
$$
For any $r=\sum^{m-1}_{i=0}r_i2^i\in\mathcal{P},$
we say that $r\preceq e$ if $r_i \leq e_i$ for all $0\leq i\leq m-1.$ By definition, we have $r\leq e$ if $r\preceq e$.

The set of coordinate permutations that map a code $\mathcal{C}$ to itself forms a group, which is referred to as the  {\em permutation automorphism
group} of $\mathcal{C}$ and denoted by $PAut(\mathcal{C})$. We define the {\em affine group} $GA_1(\mathbb{F}_q)$ by the set of all permutations
$$\sigma_{a,b}: x \mapsto ax+b$$
of $\mathbb{F}_q$, where $a\in \mathbb{F}_q^*$ and $b \in \mathbb{F}_q$. An affine-invariant code is an extended cyclic code $\overline{\mathcal{C}}$ over
$\mathbb{F}_2$ such that $GA_1(\mathbb{F}_q)\subseteq PAut(\overline{\mathcal{C}})$~\cite{HP03}.

%Let $\mathbf{c}=(c_0, c_1, \ldots, c_{n-1})$ be a codeword in a code $\mathcal{C}.$ The support of $\mathbf{c}$ is defined by
%$$Suppt(\mathbf{c})=\{0\leq i \leq n-1: c_i\neq 0\}\subseteq \{0, 1, \ldots, n-1\}.$$

For any integer $0\leq j< n$, the $2$-cyclotomic coset of $j$ modulo $2^m-1$ is defined by
$$C_j=\{jp^i \pmod {2^m-1} : 0 \le i\leq \ell_j-1\},$$
where $\ell_j$ is the smallest positive integer such that $j\equiv jp^{\ell_j}\pmod {2^m-1}.$
Let $g(x)=\prod_j\prod_{i\in C_j}(x-\alpha^i)$, where $j$ runs through some subset of  representatives of  the $2$-cyclotomic cosets
$C_j$ modulo $2^m-1.$  The set   $T=\bigcup_jC_j$ is called the {\em defining set} of $\mathcal{C}$, which is the union of these $2$-cyclotomic cosets.
%The root of unity $\mathcal{Z}=\{\alpha^i:i\in T\}$ are called the zeros of the cyclic code $\mathcal{C}$ and $\{\alpha^i:i\notin T\}$ are the nonzeros of $\mathcal{C}$.

Affine-invariance is an important property of an extended primitive cyclic code, for which the following lemma presented by Kasami, Lin and Peterson~\cite{KLP67} provides a sufficient and necessary condition by examining the defining set of the code.

\begin{lemma}\label{Kasami-Lin-Peterson}~\cite{KLP67}
  Let $\overline{\mathcal{C}}$ be an extended cyclic code of
  length $2^m$ over $\mathbb{F}_2$ with defining set $\overline{T}$. The code $\overline{\mathcal{C}}$ is affine-invariant if and
only if whenever $e \in \overline{T}$ then $r \in \overline{T}$ for all $r \in \mathcal{P}$  with $r \preceq e$.
\end{lemma}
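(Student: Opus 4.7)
The plan is to work with the defining-set description of extended primitive cyclic codes: after identifying the $2^m$ coordinates of $\overline{\mathcal{C}}$ with the elements of $\mathbb{F}_q$, a vector $(c_x)_{x\in\mathbb{F}_q}$ lies in $\overline{\mathcal{C}}$ if and only if the power sum $S_t(c):=\sum_{x\in\mathbb{F}_q}c_x x^t$ vanishes for every $t\in\overline{T}$ (with the convention $0^0=1$). I would begin by recording this equivalence, since it converts the group-theoretic hypothesis $GA_1(\mathbb{F}_q)\subseteq PAut(\overline{\mathcal{C}})$ into a clean algebraic statement about these power sums.

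Next, given $\sigma_{a,b}:x\mapsto ax+b$ and a codeword $(c_x)$, I would compute the power sum of the permuted codeword $(c_{ax+b})_{x\in\mathbb{F}_q}$ at index $t$. Substituting $y=ax+b$ and expanding $(y+b)^t$ in characteristic $2$ via Lucas's theorem, which says that $\binom{t}{r}\equiv 1\pmod 2$ precisely when $r\preceq t$, yields the key identity
\begin{equation*}
\sum_{x\in\mathbb{F}_q} c_{ax+b}\, x^t \;=\; a^{-t}\sum_{r\preceq t} b^{\,t-r}\, S_r(c).
\end{equation*}
The rest of the argument is a direct reading of this formula.

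For the \emph{if} direction, if $\overline{T}$ is closed downward under $\preceq$, then for each $t\in\overline{T}$ every $S_r(c)$ on the right-hand side vanishes, so $\sigma_{a,b}$ sends every codeword back into $\overline{\mathcal{C}}$. For the \emph{only if} direction, I would fix $t\in\overline{T}$ and view the right-hand side as a polynomial in $b$ of degree at most $t\leq q-1$; the assumption $GA_1(\mathbb{F}_q)\subseteq PAut(\overline{\mathcal{C}})$ forces this polynomial to vanish for every $b\in\mathbb{F}_q$, hence to be identically zero, so each coefficient $S_r(c)$ with $r\preceq t$ vanishes on the whole code. This is exactly the statement that $r\in\overline{T}$.

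The main obstacle is the necessity direction, where one must pass from \emph{the polynomial in $b$ vanishes at every point of $\mathbb{F}_q$} to \emph{each coefficient $S_r$ vanishes on $\overline{\mathcal{C}}$}. This step relies on the fact that the exponents $t-r$ for $r\preceq t$ are pairwise distinct and bounded above by $q-1$, so the polynomial in $b$ has degree strictly less than $q$ and the standard ``more roots than degree'' argument gives the vanishing of every coefficient. Once this linear-independence step is in place, the downward closure of $\overline{T}$ is forced, and the lemma follows.
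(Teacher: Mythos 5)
The paper offers no proof of this lemma: it is quoted from Kasami--Lin--Peterson \cite{KLP67} as a known result, so there is no internal argument to compare against. Your proposal is, in substance, the classical proof of that theorem, and it is correct. The key identity
\begin{equation*}
\sum_{x\in\mathbb{F}_q} c_{ax+b}\,x^t \;=\; a^{-t}\sum_{r\preceq t} b^{\,t-r}S_r(c),
\end{equation*}
obtained from the substitution $y=ax+b$ together with Lucas's theorem in characteristic $2$, gives sufficiency at once, and for necessity the vanishing of a polynomial in $b$ of degree at most $t\le q-1$ at all $q$ points of $\mathbb{F}_q$ does force every coefficient $S_r(c)$ to vanish, for every codeword $c$; your observation that the exponents $t-r$ with $r\preceq t$ are pairwise distinct is exactly what makes the coefficients well defined. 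The one step you leave implicit is the final passage from ``$S_r$ vanishes on all of $\overline{\mathcal{C}}$'' to ``$r\in\overline{T}$'': this needs $\overline{T}$ to be the \emph{complete} defining set, i.e.\ precisely the set of exponents whose power-sum checks annihilate the code. With the paper's convention that $T$ is the full union of $2$-cyclotomic cosets attached to the generator polynomial and $\overline{T}=T\cup\{0\}$, this holds (otherwise the code would satisfy an extra independent check and have too small a dimension), but it deserves one explicit sentence, since without it the necessity direction only yields vanishing of the checks rather than membership in $\overline{T}$.
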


\begin{lemma}\label{The dual of an affine-invariant code}~\cite{Ding18b} 
  The dual of an affine-invariant code $\overline{\mathcal{C}}$ over $\mathbb{F}_2$ of length $n+1$
is also affine-invariant.
\end{lemma}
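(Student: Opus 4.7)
The plan is to verify that $\overline{\mathcal{C}}^\perp$ satisfies both requirements in the definition of affine-invariance: that it is an extended cyclic code and that $GA_1(\mathbb{F}_q)$ lies in its permutation automorphism group. The substantive point is the second requirement, and I would establish it by first proving the general identity $PAut(\mathcal{D}) = PAut(\mathcal{D}^\perp)$ valid for any binary linear code $\mathcal{D}$.

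To see this identity, note that every coordinate permutation $\sigma$ preserves the standard inner product, i.e., $\langle \sigma(u), \sigma(v) \rangle = \langle u, v \rangle$ for all $u, v$. Hence, if $\sigma \in PAut(\mathcal{D})$ and $w \in \mathcal{D}^\perp$, then for every $c \in \mathcal{D}$ one has $\sigma^{-1}(c) \in \mathcal{D}$ and
\[
\langle \sigma(w), c \rangle = \langle \sigma(w), \sigma(\sigma^{-1}(c)) \rangle = \langle w, \sigma^{-1}(c) \rangle = 0,
\]
so $\sigma(w) \in \mathcal{D}^\perp$. The reverse inclusion follows by symmetry via $(\mathcal{D}^\perp)^\perp = \mathcal{D}$. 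Specialising to $\mathcal{D} = \overline{\mathcal{C}}$ and combining with the hypothesis $GA_1(\mathbb{F}_q) \subseteq PAut(\overline{\mathcal{C}})$ immediately yields $GA_1(\mathbb{F}_q) \subseteq PAut(\overline{\mathcal{C}}^\perp)$.

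It remains to confirm that $\overline{\mathcal{C}}^\perp$ is itself an extended cyclic code of length $n + 1 = 2^m$. Writing the cyclic code $\mathcal{C}$ with defining set $T \subseteq \mathbb{Z}_n$, one computes that the cyclic dual $\mathcal{C}^\perp$ has defining set $\mathbb{Z}_n \setminus \{-t \bmod n : t \in T\}$, and then a direct bookkeeping of defining sets across the extension and duality operations identifies $\overline{\mathcal{C}}^\perp$ with $\overline{\mathcal{C}^\perp}$. Combined with the first step, this shows that $\overline{\mathcal{C}}^\perp$ meets the definition of an affine-invariant code. The main obstacle I anticipate is this defining-set bookkeeping, because the appended coordinate is handled slightly asymmetrically under the two operations; however, the adjustment is controlled and does not interfere with the Kasami--Lin--Peterson criterion of Lemma~\ref{Kasami-Lin-Peterson}, so once the identification is in place the conclusion is immediate.
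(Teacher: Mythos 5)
The paper does not actually prove this lemma --- it is quoted from \cite{Ding18b} --- so there is no internal proof to compare against; I will assess your argument on its own terms. Your first step is correct and is indeed the substantive half of the matter: coordinate permutations preserve the standard inner product, so $PAut(\mathcal{D}) = PAut(\mathcal{D}^{\perp})$ for any linear code $\mathcal{D}$, whence $GA_1(\mathbb{F}_q) \subseteq PAut(\overline{\mathcal{C}})$ gives $GA_1(\mathbb{F}_q) \subseteq PAut(\overline{\mathcal{C}}^{\perp})$.

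The gap is in the second step. The identification of $\overline{\mathcal{C}}^{\perp}$ with $\overline{\mathcal{C}^{\perp}}$ cannot hold: extension preserves dimension, so $\dim \overline{\mathcal{C}^{\perp}} = n-k$ while $\dim \overline{\mathcal{C}}^{\perp} = (n+1)-k$, where $k=\dim\mathcal{C}$. A direct computation shows what is true instead. Pairing $(d_{\infty}, f)$ against a word $(\sum_i c_i,\, c)$ of $\overline{\mathcal{C}}$ gives the condition $\sum_i c_i(f_i+d_{\infty})=0$ for all $c\in\mathcal{C}$, so
\[
\overline{\mathcal{C}}^{\perp} \;=\; \bigl\{(d_{\infty},\, e+d_{\infty}\mathbf{1}) : e\in\mathcal{C}^{\perp},\ d_{\infty}\in\mathbb{F}_2\bigr\},
\]
i.e.\ the candidate cyclic code being extended is $\mathcal{C}^{\perp}+\langle\mathbf{1}\rangle$, not $\mathcal{C}^{\perp}$. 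Moreover, this set is genuinely an extended code (meaning $d_{\infty}$ equals the overall parity of $e+d_{\infty}\mathbf{1}$) if and only if every $e\in\mathcal{C}^{\perp}$ has even weight, equivalently $\mathbf{1}\in\mathcal{C}$; otherwise $\overline{\mathcal{C}}^{\perp}$ contains odd-weight words and is not an extended cyclic code at all. This condition does hold in the present setting, but it requires justification rather than ``controlled bookkeeping'': for a nontrivial affine-invariant $\overline{\mathcal{C}}$, the group $GA_1(\mathbb{F}_q)$ acts transitively on the coordinates, so no coordinate of $\overline{\mathcal{C}}$ can vanish identically; in particular the appended parity coordinate does not, which forces $\alpha^{0}=1$ not to be a zero of the generator polynomial of $\mathcal{C}$, i.e.\ $\mathbf{1}\in\mathcal{C}$. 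With these two repairs --- replacing $\overline{\mathcal{C}^{\perp}}$ by $\overline{\mathcal{C}^{\perp}+\langle\mathbf{1}\rangle}$ and supplying the argument that $\mathbf{1}\in\mathcal{C}$ --- your proof closes.
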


The importance of affine-invariant codes is partly due to Theorem \ref{2-design} which  will be used together with Lemmas~\ref{Kasami-Lin-Peterson} and
\ref{The dual of an affine-invariant code} to derive the existence of $2$-designs.

\begin{theorem}\label{2-design}~\cite{Ding18b}
  For each $i$ with $\overline{A_i} \neq 0$ in an affine-invariant code $\overline{\mathcal{C}}$, the supports of the codewords of weight $i$
form a $2$-design.
\end{theorem}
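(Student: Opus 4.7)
The plan is to show that $GA_1(\mathbb{F}_q)$ acts $2$-transitively on the coordinate set, and then deduce that the orbit of each weight-$i$ support under this group yields a block set on which every pair of points is covered the same number of times.

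First I would recall the coordinate labeling: in an extended cyclic code $\overline{\mathcal{C}}$ of length $n+1 = 2^m = q$, the coordinates are naturally indexed by the elements of $\mathbb{F}_q$. Under this identification, a permutation $\sigma_{a,b}\colon x \mapsto ax+b$ with $a \in \mathbb{F}_q^*$ and $b \in \mathbb{F}_q$ becomes a permutation of the coordinate set. The affine-invariance hypothesis says exactly that every such $\sigma_{a,b}$ lies in $PAut(\overline{\mathcal{C}})$, so it sends codewords to codewords while preserving Hamming weight, and hence permutes the set of weight-$i$ supports for each $i$.

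Second, I would verify the elementary fact that $GA_1(\mathbb{F}_q)$ acts $2$-transitively on $\mathbb{F}_q$. Given two ordered pairs of distinct points $(x_1,x_2)$ and $(y_1,y_2)$, set
\[
a = \frac{y_1-y_2}{x_1-x_2}, \qquad b = y_1 - a x_1.
\]
Then $a \in \mathbb{F}_q^*$ and $\sigma_{a,b}(x_j) = y_j$ for $j=1,2$. Thus any $2$-subset of $\mathbb{F}_q$ can be mapped to any other by an element of $GA_1(\mathbb{F}_q) \subseteq PAut(\overline{\mathcal{C}})$.

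Third, I would conclude the $2$-design property. Fix $i$ with $\overline{A_i} \neq 0$, let $\mathcal{B}_i$ be the multiset of supports of weight-$i$ codewords, and for a $2$-subset $S \subseteq \mathbb{F}_q$ let $\lambda(S)$ denote the number of blocks in $\mathcal{B}_i$ containing $S$. If $S'$ is another $2$-subset and $\sigma \in GA_1(\mathbb{F}_q)$ sends $S$ to $S'$, then $\sigma$ induces a bijection between blocks containing $S$ and blocks containing $S'$ (since it permutes $\mathcal{B}_i$), forcing $\lambda(S) = \lambda(S')$. Hence $\lambda$ is independent of the chosen pair, which is exactly the defining property of a $2$-$(q, i, \lambda)$ design.

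There is essentially no technical obstacle here beyond the bookkeeping of identifying coordinates with $\mathbb{F}_q$ correctly so that $GA_1(\mathbb{F}_q)$ really does act as coordinate permutations; once that identification is in place, the $2$-transitivity of the affine group plus the weight-preservation of $PAut(\overline{\mathcal{C}})$ does all the work. The proof does not invoke the Assmus--Mattson machinery, nor does it need the defining-set criterion of Lemma~\ref{Kasami-Lin-Peterson}; the latter is only used upstream to certify affine-invariance of the specific codes of interest.
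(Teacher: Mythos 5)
Your proof is correct and is exactly the argument the paper has in mind: the paper states this theorem as a quoted result from the reference \cite{Ding18b} without reproducing a proof, but Section~2 explicitly attributes the design property to the $t$-transitivity of the permutation automorphism group, which is precisely the mechanism you use (sharp $2$-transitivity of $GA_1(\mathbb{F}_q)$ on the coordinate set $\mathbb{F}_q$, weight-preservation by $PAut(\overline{\mathcal{C}})$, and orbit counting on the blocks containing a given pair). There is no gap; your observation that Lemma~\ref{Kasami-Lin-Peterson} is only needed upstream to certify affine-invariance of the specific codes also matches the paper's usage.
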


\section{Two classes of cyclic codes and their $t$-designs}\label{theorem}

In this section, we introduce the main results on the weight distributions of two classes of cyclic codes and the corresponding $2$-designs. Their proofs will be presented in the subsequent section. In the following, let $\tr_1^m$ denote the trace function from $\mathbb{F}_{2^m}$ onto $\mathbb{F}_2$.

\subsection{Results on the linear code derived from triple-error correcting BCH code}

%Let $Tr$ denote the trace function from $\mathbb{F}_q$ onto $\mathbb{F}_2$.
We define
\begin{eqnarray}\label{code-1}
  {\overline{{\mathcal{C}_1}^{\bot}}}^{\bot}:=\{ (\tr_1^m (ax^5+bx^3+cx)+h )_{x \in \mathbb{F}_q}: a,b,c \in \mathbb{F}_q, h\in \mathbb{F}_2 \},
\end{eqnarray}
where $\mathcal{C}_1$ is the cyclic code of length $n$ with the parity-check polynomial $M_1(x)M_3(x)M_5(x)$. It is easily seen that $\mathcal{C}_1^{\bot}$ is a BCH code with minimum distance $d\geq\delta=7$. Note that for $\mathcal{C}_1^{\bot}$, we only discuss the case of $m$ even since the complement case for $m$ odd has been studied in~\cite{Ding182}.

The following two theorems constitute the first part of our main results in the present paper.

\begin{theorem}\label{weight1}
Let $s\geq 3.$ The weight distributions of the code ${\overline{{\mathcal{C}_1}^{\bot}}}^{\bot}$ over $\mathbb{F}_2$ with length $n+1$ and
$dim({\overline{{\mathcal{C}_1}^{\bot}}}^{\bot})=3m+1$ are given in Table \ref{1}.
\begin{table}
\begin{center}
\caption{The weight distribution of ${\overline{{\mathcal{C}_1}^{\bot}}}^{\bot}$}\label{1}
\begin{tabular}{ll}
\hline\noalign{\smallskip}
Weight  &  Multiplicity   \\
\noalign{\smallskip}
\hline\noalign{\smallskip}
$0$  &  1 \\
$2^{2s-1}$  &  $ 29\times2^{6s-5}-33\times2^{4s-5}+17\times2^{2s-3}-2 $    \\
$2^{2s-1}-2^{s-1} $ & $ \frac{2}{15}\times2^{2s}(3\times2^{4s}+5\times2^{2s}-8)$\\
$ 2^{2s-1}+2^{s-1}$  &  $ \frac{2}{15}\times2^{2s}(3\times2^{4s}+5\times2^{2s}-8)$\\
$2^{2s-1}-2^s$  &  $ \frac{7}{3}\times2^{4s-4}(2^{2s}-1)$     \\
$ 2^{2s-1}+2^s$  &  $ \frac{7}{3}\times2^{4s-4}(2^{2s}-1)$     \\
$ 2^{2s-1}-2^{s+1} $  &  $\frac{1}{15}\times2^{2s-4}(2^{4s-2}-5\times2^{2s-2}+1)$     \\
$ 2^{2s-1}+2^{s+1}$  &  $\frac{1}{15}\times2^{2s-4}(2^{4s-2}-5\times2^{2s-2}+1)$     \\
$ 2^{2s}$  &  $ 1$     \\
\noalign{\smallskip}
\hline
\end{tabular}
\end{center}
\end{table}
\end{theorem}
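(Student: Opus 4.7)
The plan is to translate each codeword weight into a Boolean exponential sum and then pin that sum down by analyzing an underlying quadratic form. For any tuple $(a,b,c,h)\in\mathbb{F}_q^3\times\mathbb{F}_2$, the codeword $\mathbf{c}_{a,b,c,h}=(\tr_1^m(ax^5+bx^3+cx)+h)_{x\in\mathbb{F}_q}$ has Hamming weight
\[
 2^{m-1}-\tfrac{(-1)^h}{2}\,S(a,b,c),\qquad S(a,b,c):=\sum_{x\in\mathbb{F}_q}(-1)^{\tr_1^m(ax^5+bx^3+cx)},
\]
so the weight enumerator is completely determined by the multiset of values $\{S(a,b,c):(a,b,c)\in\mathbb{F}_q^3\}$ together with the $h=0/h=1$ pairing, which mirrors each value of $S$ about $2^{m-1}$. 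The dimension $3m+1$ will follow from the injectivity of $(a,b,c,h)\mapsto\mathbf{c}_{a,b,c,h}$: the identical vanishing of the codeword forces $h=0$ (evaluate at $x=0$), and then each of $\tr_1^m(ax^5)$, $\tr_1^m(bx^3)$, $\tr_1^m(cx)$ must vanish as a function of $x$, because their polynomial supports lie in the distinct cyclotomic cosets $C_5$, $C_3$, $C_1$; this forces $a=b=c=0$.

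The central observation is that $Q_{a,b}(x):=\tr_1^m(ax^5+bx^3)$ is an $\mathbb{F}_2$-quadratic form on $\mathbb{F}_q\cong\mathbb{F}_2^m$ (since $x\mapsto x^2$ and $x\mapsto x^4$ are $\mathbb{F}_2$-linear), so $S(a,b,c)$ is exactly the Walsh coefficient of $Q_{a,b}$ at the point $c$. The classical structure theorem for $\mathbb{F}_2$-quadratic forms then implies that if $Q_{a,b}$ has rank $2r$, its Walsh spectrum is supported on $\{0,\pm 2^{m-r}\}$, with the multiplicities of the two signs governed by the hyperbolic/elliptic type of its nondegenerate part. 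I will compute the associated symplectic bilinear form $B_{a,b}(x,y)=Q_{a,b}(x+y)+Q_{a,b}(x)+Q_{a,b}(y)$ and rewrite it as $\tr_1^m(y\,L_{a,b}(x))$ with
\[
 L_{a,b}(x)=ax^4+a^{2^{m-2}}x^{2^{m-2}}+bx^2+b^{2^{m-1}}x^{2^{m-1}};
\]
then $\mathrm{rad}(B_{a,b})=\ker L_{a,b}$, and raising $L_{a,b}$ to the fourth power while using $x^{2^m}=x$ on $\mathbb{F}_q$ turns the kernel equation into $x\bigl(a^4x^{15}+b^4x^7+b^2x+a\bigr)=0$.

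Because $m=2s$ is even, $\dim\ker L_{a,b}$ is even, so the number of nonzero roots of $P_{a,b}(x):=a^4x^{15}+b^4x^7+b^2x+a$ in $\mathbb{F}_q$ lies in $\{0,3,15\}$; the corresponding ranks $2s,\,2s-2,\,2s-4$ for $Q_{a,b}$ produce precisely the Walsh magnitudes $2^s,\,2^{s+1},\,2^{s+2}$ appearing in Table~\ref{1}. The remaining work is three-fold: (i) classify the pairs $(a,b)\in\mathbb{F}_q^2$ by the number of roots of $P_{a,b}$ in $\mathbb{F}_q$ and count each class; (ii) for each class decide whether $Q_{a,b}$ vanishes on its radical (otherwise all Walsh coefficients vanish and the only contribution is to $A_{2^{m-1}}$) and identify the hyperbolic/elliptic type in order to split the $\pm 2^{m-r}$ multiplicities; (iii) sum over $c\in\mathbb{F}_q$ and combine the $h=0,1$ pairs to populate Table~\ref{1}. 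The hard part will be step (i): an explicit root-count for the degree-$15$ family $P_{a,b}$ over $\mathbb{F}_{2^{2s}}$. I plan to begin with the special cases $a=0$ and $b=0$ (which reduce to counting cubes and fifteenth-power residues in $\mathbb{F}_q^*$), then handle the generic case by a suitable substitution to normalize $P_{a,b}$, and cross-check the resulting class sizes and signs via the moment identity $\sum_{a,b,c}S(a,b,c)^2=2^{4m}$ together with a fourth-power moment.
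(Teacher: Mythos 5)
Your setup coincides with the paper's: the weight formula $w_H=2^{m-1}-\tfrac{1}{2}(-1)^hS(a,b,c)$, the identification of $Q_{a,b}(x)=\tr_1^m(ax^5+bx^3)$ as a quadratic form, the linearized polynomial $ax^4+bx^2+a^{2^{m-2}}x^{2^{m-2}}+b^{2^{m-1}}x^{2^{m-1}}$ whose kernel is the radical, and the conclusion that the rank is $m$, $m-2$ or $m-4$ so that $S(a,b,c)\in\{0,\pm2^s,\pm2^{s+1},\pm2^{s+2}\}$. Where you diverge is in determining the multiplicities, and that is where the proposal has a genuine gap: your step (i), the explicit classification of the pairs $(a,b)$ by the number of roots of $P_{a,b}(x)=a^4x^{15}+b^4x^7+b^2x+a$ in $\mathbb{F}_{2^{2s}}$, together with the hyperbolic/elliptic type determination in step (ii), is exactly the hard combinatorial content, and you only announce a plan for it ("I plan to begin with\dots") rather than carrying it out. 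The two moment identities you propose as a cross-check (the second and fourth power moments of $S$) give only two equations, which cannot pin down the six-plus unknown class sizes (three rank classes, each split by sign/type); so even as a verification device they are insufficient. The paper avoids this entire enumeration: since $\mathcal{C}_1^{\bot}$ is a narrow-sense BCH code with designed distance $7$, its minimum distance is at least $7$, so $A_i^{\bot}=0$ for $1\le i\le 6$ and the first seven Pless power moments yield seven independent linear equations in the seven unknown frequencies $A_{w_1},\dots,A_{w_7}$, which determine them uniquely; Lemma \ref{relation} then transfers the answer to ${\overline{{\mathcal{C}_1}^{\bot}}}^{\bot}$. That observation (the BCH bound feeding the power moments) is the key idea your proposal is missing.

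A secondary but concrete error: in step (ii) you assert that if $Q_{a,b}$ does not vanish on its radical $V$ then \emph{all} Walsh coefficients vanish. That is false. Since $B_{a,b}$ is zero on $V$, the restriction $Q_{a,b}|_V$ is a linear functional, and $S(a,b,c)\ne 0$ precisely for the $c$ in the coset $\{c:\tr_1^m(c\,\cdot)|_V=Q_{a,b}|_V\}$, which is always nonempty of size $2^{\mathrm{rank}}$; whether $Q_{a,b}$ vanishes on $V$ only decides whether $c=0$ lies in that coset. So for every $(a,b)\ne(0,0)$ exactly $2^{\mathrm{rank}}$ values of $c$ contribute $\pm2^{m-\mathrm{rank}/2}$ and the rest contribute $0$; your accounting of the contribution to $A_{2^{m-1}}$ would have to be redone accordingly. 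The dimension argument and the reflection of weights about $2^{m-1}$ coming from $h=1$ (equivalently, Lemma \ref{relation}) are fine.
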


Note that the codes defined in (\ref{code-1}) are eight-weight.

\begin{theorem}\label{$2-$design-1}
Let $s\geq 3$ be a positive integer. Then the supports of the codewords of weight $i$ with ${\overline{{A_i}^{\bot}}}^{\bot}\neq0$ in
${\overline{{\mathcal{C}_1}^{\bot}}}^{\bot}$ form a $2$-design.
Moreover, let $\mathcal{P}=\{0, 1, \ldots, 2^m-1\}$ and $\mathcal{B}$ be the set of the supports of the codewords of
${\overline{{\mathcal{C}_1}^{\bot}}}^{\bot}$ with weight $i,$ where
${\overline{{A_i}^{\bot}}}^{\bot}\neq 0.$ Then ${\overline{{\mathcal{C}_1}^{\bot}}}^{\bot}$ holds $2$-$(2^m, i, \lambda)$ designs for the following
pairs:
\begin{itemize}
\item $(i, \lambda)=(2^{2s-1}, (29\times2^{6s-5}-33\times2^{4s-5}+17\times2^{2s-3}-2)(2^{2s-1}-1)/(2^{2s}-1)).$
\item $(i, \lambda)=(2^{2s-1}-2^{s-1}, \frac{2}{15}\times2^{s-1}(3\times2^{4s}+5\times2^{2s}-8)(2^{2s-1}-2^{s-1}-1)/(2^s+1)).$
\item $(i, \lambda)=(2^{2s-1}+2^{s-1}, \frac{2}{15}\times2^{s-1}(3\times2^{4s}+5\times2^{2s}-8)(2^{2s-1}+2^{s-1}-1)/(2^s-1)).$
\item $(i, \lambda)=(2^{2s-1}-2^s, \frac{7}{3}\times2^{3s-4}(2^{2s-1}-2^s-1)(2^{s-1}-1)).$
\item $(i, \lambda)=(2^{2s-1}+2^s, \frac{7}{3}\times2^{3s-4}(2^{2s-1}+2^s-1)(2^{s-1}+1)).$
\item $(i, \lambda)=(2^{2s-1}-2^{s+1}, \frac{1}{15}\times2^{s-3}(2^{4s-2}-5\times2^{2s-2}+1)(2^{2s-1}-2^{s+1}-1)(2^{s-2}-1)/(2^{2s}-1)).$
\item $(i, \lambda)=(2^{2s-1}+2^{s+1}, \frac{1}{15}\times2^{s-3}(2^{4s-2}-5\times2^{2s-2}+1)(2^{2s-1}+2^{s+1}-1)(2^{s-2}+1)/(2^{2s}-1)).$
\end{itemize}
\end{theorem}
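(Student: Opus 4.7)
The plan is to apply Theorem~\ref{2-design} to ${\overline{{\mathcal{C}_1}^{\bot}}}^{\bot}$; once the code is shown to be affine-invariant, every weight appearing in Table~\ref{1} automatically yields a $2$-design, and the parameter $\lambda$ is then forced by the counting identity~(\ref{condition}).

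To establish affine-invariance, I would first read off the defining set of $\overline{{\mathcal{C}_1}^{\bot}}$ from the trace parametrisation~(\ref{code-1}): a vector $d=(d_x)_{x\in\mathbb{F}_q}$ is orthogonal to every codeword $(\tr_1^m(ax^5+bx^3+cx)+h)_{x\in\mathbb{F}_q}$ precisely when $\sum_{x\in\mathbb{F}_q}d_x\,x^i=0$ for $i\in\{0,1,3,5\}$, and taking the $2$-cyclotomic closure gives $\overline{T}=\{0\}\cup C_1\cup C_3\cup C_5$. Next I would check the Kasami--Lin--Peterson criterion. Each element of $C_1$ is a single power $2^i$ (binary weight $1$), and each element of $C_3$ or $C_5$ can be written as $2^a+2^{a'}$ with the exponents differing cyclically modulo $m$ by $1$ or $2$ respectively (including the wrap-around cases $1+2^{m-1}$ for $C_3$ and $1+2^{m-2}$, $2+2^{m-1}$ for $C_5$), so has binary weight $2$. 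Consequently, if $e\in\overline{T}$ and $r\preceq e$, then $r\in\{0,\,2^a,\,2^{a'},\,e\}$, and each of these lies in $\{0\}\cup C_1\cup C_3\cup C_5=\overline{T}$. Lemma~\ref{Kasami-Lin-Peterson} then yields the affine-invariance of $\overline{{\mathcal{C}_1}^{\bot}}$, and Lemma~\ref{The dual of an affine-invariant code} transfers the property to ${\overline{{\mathcal{C}_1}^{\bot}}}^{\bot}$.

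Applying Theorem~\ref{2-design} then produces, for each weight $i$ with $\overline{A_i^{\bot}}^{\bot}\neq 0$ in Table~\ref{1}, a $2$-design on $\mathcal{P}=\{0,1,\dots,2^m-1\}$ with block size $k=i$ and block count $b=\overline{A_i^{\bot}}^{\bot}$. The parameter $\lambda$ then follows from~(\ref{condition}):
\[
\lambda=\frac{b\cdot i(i-1)}{2^{2s}\,(2^{2s}-1)}.
\]
Substituting each of the seven nontrivial rows of Table~\ref{1} into this formula, and simplifying with the factorisations $2^{2s}-1=(2^s-1)(2^s+1)$, $2^{2s-1}\pm 2^{s-1}=2^{s-1}(2^s\pm 1)$, $2^{2s-1}\pm 2^s=2^s(2^{s-1}\pm 1)$, $2^{2s-1}\pm 2^{s+1}=2^{s+1}(2^{s-2}\pm 1)$, produces the listed pairs $(i,\lambda)$. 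The only non-mechanical step is the downward closure of $\overline{T}$ under $\preceq$; the crux is the structural claim that $C_3$ and $C_5$ consist entirely of binary patterns of weight two, which must be tracked carefully through the three wrap-around elements listed above. Everything else is formal.
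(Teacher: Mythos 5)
Your proposal is correct and follows essentially the same route as the paper: establish affine-invariance of $\overline{{\mathcal{C}_1}^{\bot}}$ via the Kasami--Lin--Peterson criterion (the paper's Lemma~\ref{affine invariant} uses exactly your weight-one/weight-two argument for $C_1$, $C_3$, $C_5$), transfer it to the dual by Lemma~\ref{The dual of an affine-invariant code}, invoke Theorem~\ref{2-design}, and read off $\lambda$ from Eq.~(\ref{condition}) and Table~\ref{1}. One small caveat: your (correct) formula $\lambda=b\,i(i-1)/\bigl(2^{2s}(2^{2s}-1)\bigr)$ gives, for $i=2^{2s-1}$, half of the value printed in the first bullet of the theorem (and agrees with Example~\ref{example1}), so that bullet appears to contain a factor-of-$2$ misprint rather than a flaw in your argument.
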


The following Examples \ref{example1} and \ref{example2} from Magma program confirm the main results in Theorems \ref{weight1} and \ref{$2-$design-1}.

\begin{example}\label{example1}
If $s=3,$ then the code ${\overline{{\mathcal{C}_1}^{\bot}}}^{\bot}$ has parameters $[64, 19, 16]$ and weight enumerator
$1+252z^{16}+37632z^{24}+107520z^{28}+233478z^{32}+107520z^{36}+37632z^{40}+252z^{48}+z^{64}.$ It gives $2$-$(64, i, \lambda)$ designs with the
following pairs $(i, \lambda):$
$$(16, 15), (24, 5152), (28, 20160), (32, 57443), (36, 33600), (40, 14560), (48, 141).$$
\end{example}

\begin{example}\label{example2}
If $s=4,$ then the code ${\overline{{\mathcal{C}_1}^{\bot}}}^{\bot}$ has parameters $[256, 25, 96]$ and weight enumerator
$1+17136z^{96}+2437120z^{112}+6754304z^{120}+15137310z^{128}+6754304z^{136}+2437120z^{144}+17136z^{160}+z^{256}.$
\end{example}

It is worth noting that, for $m=4$,
%\begin{example}\label{example-10}
the code ${\overline{{\mathcal{C}_1}^{\bot}}}^{\bot}$ has parameters $[16, 11, 4]$ and weight enumerator
  $1+140z^4+448z^6+870z^8+448z^{10}+140z^{12}+z^{16}$. It forms $3$-$(16, i, \lambda)$ designs with the following pairs $(i, \lambda):$
$$(4, 1), (6, 16), (8, 87), (10, 96), (12, 55).$$
%\end{example}

\subsection{Results on the code related to the generalized Kasami case}

We define
\begin{eqnarray}\label{code-2}
  \lefteqn{{\overline{{\mathcal{C}_2}^{\bot}}}^{\bot}:=\{ (\tr_1^s(ax^{2^s+1})+ \tr_1^m(bx^{2^l+1}+cx)+h )_{x \in \mathbb{F}_q}:} \\
&  &\qquad \qquad \qquad \qquad \qquad \quad a \in \mathbb{F}_{2^s}, b, c \in
\mathbb{F}_q, h\in
\mathbb{F}_2 \},\nonumber
\end{eqnarray}
where $\mathcal{C}_2$ is the cyclic code of length $n$  with the parity-check polynomial $M_1(x)M_{2^l+1}(x)M_{2^s+1}(x)$. Note that $\mathcal{C}_2^{\bot}$ is the dual of the extended cyclic code of the parameters satisfying the generalized Kasami case.

For $\overline{{\mathcal{C}_2}^{\bot}}^{\bot}$, we present the main results in the following two theorems.

\begin{theorem}\label{weight2}
Let $1\leq l\leq m-1.$ The weight distributions of the code ${\overline{{\mathcal{C}_2}^{\bot}}}^{\bot}$ over $\mathbb{F}_2$ with length $n+1$ and
$dim({\overline{{\mathcal{C}_2}^{\bot}}}^{\bot})=\frac{5m}{2}+1$ are given in Tables \ref{2} and \ref{3}.
\begin{table}
\begin{center}
\caption{The weight distribution of ${\overline{{\mathcal{C}_2}^{\bot}}}^{\bot}$ when $d'=d$}\label{2}
\begin{tabular}{ll}
\hline\noalign{\smallskip}
Weight  &  Multiplicity   \\
\noalign{\smallskip}
\hline\noalign{\smallskip}
$0$  &  1 \\
$2^{2s-1}-2^{s-1}$  &  $ 2^{2s}(2^s-1)(2^{2(s+d)}-2^{2s+d}-2^{2s}+2^{s+2d}-2^{s+d}+2^{2d})/(2^{2d}-1)$    \\
$2^{2s-1}+2^{s-1}$  &  $ 2^{2s}(2^s-1)(2^{2(s+d)}-2^{2s+d}-2^{2s}+2^{s+2d}-2^{s+d}+2^{2d})/(2^{2d}-1)$    \\
$ 2^{2s-1}-2^{s+d-1}$  &  $ 2^{2(s-d)}(2^{s+d}-1)(2^{2s}-1)/(2^{2d}-1)$\\
$ 2^{2s-1}+2^{s+d-1}$  &  $ 2^{2(s-d)}(2^{s+d}-1)(2^{2s}-1)/(2^{2d}-1)$\\
$ 2^{2s-1}$  &  $ 2(2^{3s-d}-2^{2(s-d)}+1)(2^{2s}-1)$     \\
$ 2^{2s}$  &  $ 1$     \\
\noalign{\smallskip}
\hline
\end{tabular}
\end{center}
\end{table}

\begin{table}
\begin{center}
\caption{The weight distribution of ${\overline{{\mathcal{C}_2}^{\bot}}}^{\bot}$ when $d'=2d$}\label{3}
\begin{tabular}{ll}
\hline\noalign{\smallskip}
Weight  &  Multiplicity   \\
\noalign{\smallskip}
\hline\noalign{\smallskip}
$0$  &  1 \\
$2^{2s-1}-2^{s-1}$  &  $ 2^{2s+3d}(2^s-1)(2^{2s}-2^{2(s-d)}-2^{2s-3d}+2^s-2^{s-d}+1)/(2^{2d}-1)(2^d+1)$    \\
$2^{2s-1}+2^{s-1}$  &  $ 2^{2s+3d}(2^s-1)(2^{2s}-2^{2(s-d)}-2^{2s-3d}+2^s-2^{s-d}+1)/(2^{2d}-1)(2^d+1)$    \\
$ 2^{2s-1}-2^{s+d-1}$  &  $ 2^{2s-d}(2^{2s}-1)(2^s+2^{s-d}+2^{s-2d}+1)/(2^d+1)^2$\\
$ 2^{2s-1}+2^{s+d-1}$  &  $ 2^{2s-d}(2^{2s}-1)(2^s+2^{s-d}+2^{s-2d}+1)/(2^d+1)^2$\\
$ 2^{2s-1}$  &  $ 2(2^{2s}-1)(2^{3s-d}-2^{3s-2d}+2^{3s-3d}-2^{3s-4d}+2^{3s-5d}+2^{2s-d}-2^{2s-2d+1}$\\
&$+2^{2s-3d}-2^{2s-4d}+1)$     \\
$ 2^{2s-1}-2^{s+2d-1}$  &  $ 2^{2s-4d}(2^{s-d}-1)(2^{2s}-1)/(2^d+1)(2^{2d}-1)$\\
$ 2^{2s-1}+2^{s+2d-1}$  &  $ 2^{2s-4d}(2^{s-d}-1)(2^{2s}-1)/(2^d+1)(2^{2d}-1)$\\
$ 2^{2s}$  &  $ 1$     \\
\noalign{\smallskip}
\hline
\end{tabular}
\end{center}
\end{table}
\end{theorem}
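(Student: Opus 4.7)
The plan is to express the weight of each codeword as a character sum, recognize that sum as a Walsh value of a quadratic form on $\mathbb{F}_{2^m}$ viewed as an $\mathbb{F}_{2}$-space, and then classify the quadratic forms that arise as $(a,b)$ varies. Writing $c_{a,b,c,h}=(f_{a,b,c,h}(x))_{x\in\mathbb{F}_q}$ for a generic codeword, one has
$$
\mathrm{wt}(c_{a,b,c,h})=2^{m-1}-\tfrac{(-1)^{h}}{2}\,S(a,b,c),\qquad
S(a,b,c)=\sum_{x\in\mathbb{F}_q}(-1)^{Q_{a,b}(x)+\tr_1^m(cx)},
$$
with $Q_{a,b}(x)=\tr_1^s(ax^{2^s+1})+\tr_1^m(bx^{2^l+1})$. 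Since $ax^{2^s+1}\in\mathbb{F}_{2^s}$ when $a\in\mathbb{F}_{2^s}$, I would fix $\theta\in\mathbb{F}_{2^m}$ with $\tr_s^m(\theta)=1$ and use $\tr_1^s(z)=\tr_1^m(\theta z)$ for $z\in\mathbb{F}_{2^s}$ to recast $Q_{a,b}$ as $\tr_1^m(\theta ax^{2^s+1}+bx^{2^l+1})$, a bona fide quadratic form on $\mathbb{F}_{2^m}$.

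Next I compute the polarization $B_{a,b}(x,y)=Q_{a,b}(x+y)+Q_{a,b}(x)+Q_{a,b}(y)$ and fold Frobenius powers via $\tr_1^m(z^{2^i})=\tr_1^m(z)$. The contribution $\tr_1^m(\theta a(xy^{2^s}+x^{2^s}y))$ collapses, after the substitution $z^{2^{m-s}}\mapsto z$ and using $a^{2^s}=a$ together with $\theta+\theta^{2^s}=\tr_s^m(\theta)=1$, to $\tr_1^m(y\,ax^{2^s})$; a parallel reduction on the $b$-term gives $B_{a,b}(x,y)=\tr_1^m\bigl(y\,L_{a,b}(x)\bigr)$ with the linearized polynomial
$$
L_{a,b}(x)=ax^{2^s}+bx^{2^l}+b^{2^{m-l}}x^{2^{m-l}}.
$$
The radical of $B_{a,b}$ therefore has $\mathbb{F}_2$-dimension $\dim_{\mathbb{F}_2}\ker L_{a,b}$, so the rank of $Q_{a,b}$ is $2r=m-\dim_{\mathbb{F}_2}\ker L_{a,b}$, and the classical Walsh-spectrum formula for quadratic forms in characteristic two then forces $S(a,b,c)\in\{0,\pm 2^{m-r}\}$ whenever $c$ lies in the image of $L_{a,b}$ (and $S(a,b,c)=0$ otherwise).

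The main technical step, and the principal obstacle, is a rank-type enumeration: for each admissible $r$, count the pairs $(a,b)\in\mathbb{F}_{2^s}\times\mathbb{F}_{2^m}$ with $\dim_{\mathbb{F}_2}\ker L_{a,b}=m-2r$, and split further by whether $Q_{a,b}$ is hyperbolic or elliptic on a complement of its radical. Applying $x\mapsto x^{2^l}$ to $L_{a,b}(x)=0$ and eliminating one monomial reduces the kernel condition to an equation stabilised by an iterated Frobenius, whose fixed field is controlled precisely by $d=\gcd(s,l)$ and $d'=\gcd(s+l,2l)$; the two regimes $d'=d$ and $d'=2d$ produce genuinely different intersections of these fixed subfields inside $\mathbb{F}_{2^m}$, which is why Tables \ref{2} and \ref{3} split. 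The hyperbolic/elliptic type for each admissible $(a,b)$ can then be determined by computing the low-order Pless power moments $\sum_{a,b,c}S(a,b,c)^{k}$ for $k=1,2$, which are evaluated directly by interchanging sums and matched against the candidate multiplicities.

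Finally, with the pair-count of $(a,b)$ by (rank, type) in hand, each admissible pair contributes a fixed number of $c\in\mathbb{F}_q$ giving each of the values $0,\pm 2^{m-r}$; doubling by $h\in\mathbb{F}_2$ and separating the degenerate pair $(a,b)=(0,0)$ (which gives the all-zero and all-ones codewords together with $2(2^m-2)$ balanced codewords coming from nonzero linear forms $\tr_1^m(cx)$) yields the multiplicities listed in Tables \ref{2} and \ref{3}. A consistency cross-check against the total $2^{5s+1}$ codewords and the stated dimension $\tfrac{5m}{2}+1$ will confirm the tabulated distributions.
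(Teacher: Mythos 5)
Your overall framework is sound, but it is not the paper's route, and it leaves the genuinely hard step unproved. The paper's proof of Theorem \ref{weight2} is essentially two citations: the weight distribution of $\mathcal{C}_2$ itself is taken as known from Luo--Tang--Wang \cite{LTW10} (quoted as Lemma \ref{Weight distribution}, Tables \ref{4} and \ref{5}), and Lemma \ref{relation} converts the distribution of $\mathcal{C}_2$ into that of ${\overline{{\mathcal{C}_2}^{\bot}}}^{\bot}$ by pairing each weight $w$ with $n+1-w$; one checks for instance that $2^{s-1}(2^{2s}-1)+2^{s-1}(2^s-1)^2=2^{2s}(2^s-1)$, which is exactly how the first two rows of Table \ref{2} arise from Table \ref{4}. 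You instead set out to re-derive the Luo--Tang--Wang result from scratch.

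The gap in that re-derivation is concrete. Everything up to the polarization identity $B_{a,b}(x,y)=\tr_1^m\bigl(y\,L_{a,b}(x)\bigr)$ with $L_{a,b}(x)=ax^{2^s}+bx^{2^l}+b^{2^{m-l}}x^{2^{m-l}}$ is correct and standard, but the entire content of the theorem is the enumeration you then defer: for how many pairs $(a,b)\in\mathbb{F}_{2^s}\times\mathbb{F}_{2^m}$ does the kernel of $L_{a,b}$ have each admissible dimension, and what is the sign (hyperbolic versus elliptic) split within each rank class. You assert that this ``is controlled precisely by $d$ and $d'$'' but give no argument; that analysis is where the case distinction between Tables \ref{2} and \ref{3} and all of the multiplicities come from, and it occupies most of \cite{LTW10}. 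Moreover, the moment identities for $k=1,2$ that you propose cannot by themselves determine a distribution with six or eight unknown frequencies; they only close the system once the rank distribution is already in hand. There are also small slips suggesting the computation was not actually carried through: the degenerate pair $(a,b)=(0,0)$ contributes $2(2^m-1)$ balanced codewords (one for each $c\neq 0$ and each $h$), not $2(2^m-2)$. As written, the proposal is a reasonable plan whose decisive step is missing; the economical repair is to do what the paper does and invoke the known weight distribution of $\mathcal{C}_2$ together with Lemma \ref{relation}.
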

Note that the code are six-weight when $d'=d$ and eight-weight when $d'=2d$.

\begin{theorem}\label{$2-$design-2}
Let $s\geq 2$ be a positive integer. Then the supports of the codewords of weight $i$ with ${\overline{{A_i}^{\bot}}}^{\bot}\neq0$ in
${\overline{{\mathcal{C}_2}^{\bot}}}^{\bot}$ give a $2$-design.
Moreover, let $\mathcal{P}=\{0, 1, \ldots, 2^m-1\}$ and $\mathcal{B}$ be the set of the supports of the codewords of
${\overline{{\mathcal{C}_2}^{\bot}}}^{\bot}$ with weight $i,$ where
${\overline{{A_i}^{\bot}}}^{\bot}\neq 0.$ Then ${\overline{{\mathcal{C}_2}^{\bot}}}^{\bot}$ holds $2$-$(2^m, i, \lambda)$ designs for the following
pairs:

(1) if $d'=d,$
\begin{itemize}
\item $(i, \lambda)=(2^{2s-1}-2^{s-1}, 2^{s-1}(2^s-1)(2^{2s-1}-2^{s-1}-1)(2^{2(s+d)}-2^{2s+d}-2^{2s}+2^{s+2d}-2^{s+d}+2^{2d})/(2^{2d}-1)(2^s+1)).$
\item $(i, \lambda)=(2^{2s-1}+2^{s-1}, 2^{s-1}(2^{2s-1}+2^{s-1}-1)(2^{2(s+d)}-2^{2s+d}-2^{2s}+2^{s+2d}-2^{s+d}+2^{2d})/(2^{2d}-1)).$
\item $(i, \lambda)=(2^{2s-1}-2^{s+d-1}, 2^{s-d-1}(2^{s-d}-1)(2^{s+d}-1)(2^{2s-1}-2^{s+d-1}-1)/(2^{2d}-1)).$
\item $(i, \lambda)=(2^{2s-1}+2^{s+d-1}, 2^{s-d-1}(2^{s-d}+1)(2^{s+d}-1)(2^{2s-1}+2^{s+d-1}-1)/(2^{2d}-1)).$
\item $(i, \lambda)=(2^{2s-1}, (2^{2s-1}-1)(2^{3s-d}-2^{2s-2d}+1)).$
\end{itemize}
\end{theorem}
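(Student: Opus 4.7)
The plan is to derive the 2-design property from affine-invariance of $\overline{\mathcal{C}_2^{\bot}}^{\bot}$ via Theorem \ref{2-design}, and then to determine each $\lambda$ from the standard identity (\ref{condition}) applied to the multiplicities in Table \ref{2}. Once the affine-invariance is in hand, every nonzero-weight class $i$ with $\overline{A_i^{\bot}}^{\bot}\neq 0$ yields a $2$-$(2^m,i,\lambda)$ design with
\begin{equation*}
\lambda \;=\; \frac{\overline{A_i^{\bot}}^{\bot}\cdot i(i-1)}{2^m(2^m-1)}.
\end{equation*}

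The heart of the argument is the affine-invariance, which I would prove directly from the trace representation (\ref{code-2}). Take any $\sigma_{a,b}\in GA_1(\mathbb{F}_q)$ and any codeword $f(x)=\tr_1^s(Ax^{2^s+1})+\tr_1^m(Bx^{2^l+1}+Cx)+h$; the claim is that $f(ax+b)$ has the same form. Expanding $(ax+b)^{2^s+1}=a^{2^s+1}x^{2^s+1}+ab^{2^s}x+a^{2^s}bx^{2^s}+b^{2^s+1}$, the two cross-terms satisfy $(Aab^{2^s}x)^{2^s}=Aa^{2^s}bx^{2^s}$ (using $A\in\mathbb{F}_{2^s}$ and $b^{2^{2s}}=b$), so by the trace tower $\tr_1^s\circ\tr_s^m=\tr_1^m$ they collapse into the single term $\tr_1^m(Aab^{2^s}x)$. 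The pure $x^{2^s+1}$ term becomes $\tr_1^s(Aa^{2^s+1}x^{2^s+1})$ with leading coefficient $Aa^{2^s+1}\in\mathbb{F}_{2^s}$ (since $a^{2^s+1}$ is the norm from $\mathbb{F}_q$ to $\mathbb{F}_{2^s}$), while $\tr_1^s(Ab^{2^s+1})$ contributes to the constant. The analogous expansion of $(ax+b)^{2^l+1}$ is handled by the Frobenius identity $\tr_1^m(y^{2^l})=\tr_1^m(y)$, which absorbs its cross-terms into a single $\tr_1^m((\,\cdots\,)x)$ contribution. Collecting everything yields a codeword with new coefficients $A'=Aa^{2^s+1}\in\mathbb{F}_{2^s}$, $B'=Ba^{2^l+1}\in\mathbb{F}_q$, a combined linear coefficient $C'\in\mathbb{F}_q$, and a constant $h'\in\mathbb{F}_2$. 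Hence $GA_1(\mathbb{F}_q)\subseteq PAut(\overline{\mathcal{C}_2^{\bot}}^{\bot})$. Alternatively, one can verify the $\preceq$-closure criterion of Lemma \ref{Kasami-Lin-Peterson} on the defining set $\{0\}\cup C_{-1}\cup C_{-(2^l+1)}\cup C_{-(2^s+1)}$ of $\overline{\mathcal{C}_2^{\bot}}$ and then invoke Lemma \ref{The dual of an affine-invariant code}. Either way, this is the main obstacle in the proof: the trace route requires careful bookkeeping of the mixed $\tr_1^s$- and $\tr_1^m$-contributions, while the defining-set route demands a combinatorial check on the binary expansions of the representatives of the three cyclotomic cosets.

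Once affine-invariance is established, Theorem \ref{2-design} immediately gives 2-designs on $\mathcal{P}=\{0,1,\ldots,2^m-1\}$ for each weight $i$ with $\overline{A_i^{\bot}}^{\bot}\neq 0$. For the case $d'=d$, I would substitute each of the five multiplicities from Table \ref{2} into the displayed $\lambda$-formula and simplify using the factorizations $2^{2s}-1=(2^s-1)(2^s+1)$, $2^{2s-1}\pm 2^{s-1}=2^{s-1}(2^s\pm 1)$, and $2^{2s-1}\pm 2^{s+d-1}=2^{s+d-1}(2^{s-d}\pm 1)$. The resulting cancellations between the $(2^s\pm 1)$ and $(2^{s-d}\pm 1)$ factors appearing in the numerator of $\overline{A_i^{\bot}}^{\bot}$ and the denominator $2^m(2^m-1)$ reproduce the five listed pairs $(i,\lambda)$ without any combinatorial subtlety.
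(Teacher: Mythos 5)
Your proposal is correct, and its core step takes a genuinely different route from the paper's. The paper never touches the trace representation for the automorphism argument: it proves (in Lemma~\ref{affine invariant}) that $\overline{{\mathcal{C}_2}^{\bot}}$ is affine-invariant by applying the Kasami--Lin--Peterson criterion (Lemma~\ref{Kasami-Lin-Peterson}) to the defining set $C_1\cup C_{2^l+1}\cup C_{2^s+1}\cup\{0\}$, whose elements all have $2$-weight $1$ or $2$ and hence are $\preceq$-downward closed; it then passes to ${\overline{{\mathcal{C}_2}^{\bot}}}^{\bot}$ by Lemma~\ref{The dual of an affine-invariant code}, invokes Theorem~\ref{2-design}, and computes $\lambda$ from Eq.~(\ref{condition}) exactly as you do. Your primary argument instead verifies $GA_1(\mathbb{F}_q)\subseteq PAut({\overline{{\mathcal{C}_2}^{\bot}}}^{\bot})$ directly on the trace expression in (\ref{code-2}), and your bookkeeping is right: $Aa^{2^s+1}\in\mathbb{F}_{2^s}$ because $a^{2^s+1}$ is a norm, the two cross-terms of $(ax+b)^{2^s+1}$ form $\tr_s^m(Aab^{2^s}x)$ and collapse via the trace tower into the $\tr_1^m(C'x)$ term, and the $(ax+b)^{2^l+1}$ cross-terms are absorbed by Frobenius invariance of $\tr_1^m$. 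This buys you independence from the duality lemma and from the defining-set combinatorics, at the cost of the trace manipulations; the paper's route is shorter given its Lemmas~\ref{Kasami-Lin-Peterson} and~\ref{The dual of an affine-invariant code}. One caution on your stated alternative: if you write the defining set of $\overline{{\mathcal{C}_2}^{\bot}}$ with the cosets $C_{-1}, C_{-(2^l+1)}, C_{-(2^s+1)}$, the representatives have large $2$-weight and the $\preceq$-closure check fails as stated; the criterion must be applied with the convention the paper uses (cosets of $1$, $2^l+1$, $2^s+1$ themselves), so the weight-$\le 2$ argument goes through. Your $\lambda$ computations from Table~\ref{2} and the identity $b\binom{i}{2}=\lambda\binom{2^m}{2}$ check out against all five listed pairs.
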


(2) if $d'=2d,$
\begin{itemize}
\item $(i, \lambda)=(2^{2s-1}-2^{s-1},
    2^{3d}(2^{2s}-2^{2(s-d)}-2^{2s-3d}+2^s-2^{s-d}+1)(2^{2s-1}-2^{s-1}-1)(2^{2s-1}-2^{s-1})/(2^{2d}-1)(2^d+1)(2^s+1)).$
\item $(i, \lambda)=(2^{2s-1}+2^{s-1},
    2^{3d}(2^{2s}-2^{2(s-d)}-2^{2s-3d}+2^s-2^{s-d}+1)(2^{2s-1}+2^{s-1}-1)(2^{2s-1}+2^{s-1})/(2^{2d}-1)(2^d+1)(2^s+1)).$
\item $(i, \lambda)=(2^{2s-1}-2^{s+d-1}, (2^s+2^{s-d}+2^{s-2d}+1)(2^{2s-1}-2^{s+d-1})(2^{2s-1}-2^{s+d-1}-1)/2^d(2^d+1)^2).$
\item $(i, \lambda)=(2^{2s-1}+2^{s+d-1}, (2^s+2^{s-d}+2^{s-2d}+1)(2^{2s-1}+2^{s+d-1})(2^{2s-1}+2^{s+d-1}-1)/2^d(2^d+1)^2).$
\item $(i, \lambda)=(2^{2s-1}, 2(2^{2s-1}-1)(2^{3s-d}-2^{3s-2d}+2^{3s-3d}-2^{3s-4d}+2^{3s-5d}+2^{2s-d}-2^{2s-2d+1}+2^{2s-3d}-2^{2s-4d}+1)/2^d).$
\item $(i, \lambda)=(2^{2s-1}-2^{s+2d-1}, (2^{s-d}-1)(2^{2s-1}-2^{s+2d-1})(2^{2s-1}-2^{s+2d-1}-1)/2^{4d}(2^d+1)(2^{2d}-1)).$
\item $(i, \lambda)=(2^{2s-1}+2^{s+2d-1}, (2^{s-d}-1)(2^{2s-1}+2^{s+2d-1})(2^{2s-1}+2^{s+2d-1}-1)/2^{4d}(2^d+1)(2^{2d}-1)).$
\end{itemize}

The following two examples from Magma program confirm the results in Theorems~\ref{weight2} and~\ref{$2-$design-2}.

\begin{example}\label{example4}
If $(s, l)=(2, 1),$ then the code ${\overline{{\mathcal{C}_2}^{\bot}}}^{\bot}$ has parameters $[16, 11, 4]$ and weight enumerator
$1+140z^4+448z^6+870z^8+448z^{10}+140z^{12}+z^{16}.$ It gives $2$-$(16, i, \lambda)$ designs with the
following pairs $(i, \lambda):$
$$(4, 7), (6, 56), (8, 203), (10, 168), (12, 77).$$
\end{example}

\begin{example}\label{example5}
If $(s, l)=(3, 2),$ then the code ${\overline{{\mathcal{C}_2}^{\bot}}}^{\bot}$ has parameters $[64, 16, 24]$ and weight enumerator
$1+5040z^{24}+12544z^{28}+30366z^{32}+12544z^{36}+5040z^{40}+z^{64}.$ It holds $2$-$(64, i, \lambda)$ designs with the
following pairs $(i, \lambda):$
$$(24, 690), (28, 2352), (32, 7471), (36, 3920), (40, 1950).$$
\end{example}

\begin{example}\label{example6}
If $(s, l)=(3, 1),$ then the code ${\overline{{\mathcal{C}_2}^{\bot}}}^{\bot}$ has parameters $[64, 16, 16]$ and weight enumerator
$1+84z^{16}+3360z^{24}+17920z^{28}+22806z^{32}+17920z^{36}+3360z^{40}+84z^{48}+z^{64}.$ It forms $2$-$(64, i, \lambda)$ designs with the
following pairs $(i, \lambda):$
$$(16, 5), (24, 460), (28, 3360), (32, 5611), (36, 5600), (40, 1300), (48, 47).$$
\end{example}

\section{Proofs of the main results}\label{Proofs-main-results}

\subsection{Three lemmas related to the weights of codes}

In order to determine the weight distributions of the two classes of cyclic codes ${\overline{{\mathcal{C}_1}^{\bot}}}^{\bot}$ and ${\overline{{\mathcal{C}_2}^{\bot}}}^{\bot}$, we need the following lemmas.

\begin{lemma}\label{relation}~\cite{Ding182}
Let $\mathcal{C}$ be an $[n, k, d]$ binary linear code, then ${\overline{{\mathcal{C}}^{\bot}}}^{\bot}$ has parameters $[n+1, k+1,
{\overline{d^{\bot}}}^{\bot}]$. Furthermore, ${\overline{{\mathcal{C}}^{\bot}}}^{\bot}$ has only even-weight codewords, and all the nonzero weights in
${\overline{{\mathcal{C}}^{\bot}}}^{\bot}$ are the following:
$$w_1, w_2, \ldots, w_t; n+1-w_1, n+2-w_2, \ldots, n+1-w_t; n+1,$$
where $w_1, w_2, \ldots, w_t$ denote all the nonzero weights of $\mathcal{C}.$
\end{lemma}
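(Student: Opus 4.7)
The plan is to write down an explicit generator matrix of $\overline{\mathcal{C}^\bot}^\bot$ and then to read off the length, dimension, and entire weight list directly. Let $G$ be a generator matrix of $\mathcal{C}$; by duality $G$ is a parity-check matrix of $\mathcal{C}^\bot$, and the definition of the extended code then yields the parity-check matrix
\[
\overline{H} \;=\; \begin{pmatrix} G & \mathbf{0}^{\mathsf T} \\ \mathbf{1}_n & 1 \end{pmatrix}
\]
of $\overline{\mathcal{C}^\bot}$. Its $k+1$ rows are linearly independent, because the bottom row is the only one with a nonzero entry in the final coordinate, so $\overline{\mathcal{C}^\bot}$ has length $n+1$ and dimension $n-k$; dualizing yields $\overline{\mathcal{C}^\bot}^\bot$ of length $n+1$ and dimension $k+1$, and its minimum distance is $\overline{d^\bot}^\bot$ by definition.

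Next I would use $\overline{H}$ itself as a generator matrix of $\overline{\mathcal{C}^\bot}^\bot$. Every codeword is then uniquely expressible as $\alpha(\mathbf{1}_n,1) + (\mathbf{c},0)$ with $\alpha \in \mathbb{F}_2$ and $\mathbf{c} \in \mathcal{C}$, and a direct weight computation gives $\mathrm{wt}(\mathbf{c})$ when $\alpha = 0$ and $n+1-\mathrm{wt}(\mathbf{c})$ when $\alpha = 1$. Letting $\mathbf{c}$ range over $\mathcal{C}$ produces exactly the announced list of nonzero weights $w_1,\ldots,w_t$, $n+1-w_1,\ldots,n+1-w_t$, and $n+1$ (the last coming from $\alpha=1,\ \mathbf{c}=\mathbf{0}$).

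To justify the ``only even-weight codewords'' claim I would invoke the context in which the lemma is applied in this paper: there $\mathcal{C}$ is a binary cyclic code of odd length $n = 2^m-1$ whose generator polynomial is divisible by $x+1$, so $\mathcal{C}$ is an even-weight code, while $n+1 = 2^m$ is even. Under these hypotheses both $\mathrm{wt}(\mathbf{c})$ and $n+1-\mathrm{wt}(\mathbf{c})$ are even for every $\mathbf{c} \in \mathcal{C}$, which forces every codeword of $\overline{\mathcal{C}^\bot}^\bot$ to have even weight. I do not foresee a real technical obstacle; the only delicate step is choosing a generator matrix that cleanly decouples the $k$ ``old'' directions inherited from $\mathcal{C}$ from the single ``new'' direction contributed by the all-ones vector, after which the dimension count, the weight list, and the parity claim all follow by direct inspection.
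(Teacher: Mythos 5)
Your argument is correct, and it is worth noting that the paper itself offers no proof of this lemma at all: it is simply quoted from \cite{Ding182}. Your generator-matrix computation is the standard and complete way to establish it. The key steps all check out: $\overline{H}$ has rank $k+1$ because the bottom row is the unique row with a nonzero last entry, it is a parity-check matrix for $\overline{\mathcal{C}^{\bot}}$ and hence a generator matrix for ${\overline{\mathcal{C}^{\bot}}}^{\bot}$, and the unique decomposition $\alpha(\mathbf{1}_n,1)+(\mathbf{c},0)$ gives weights $\mathrm{wt}(\mathbf{c})$ and $n+1-\mathrm{wt}(\mathbf{c})$, which is exactly the stated list (the ``$n+2-w_2$'' in the statement is a typo for $n+1-w_2$). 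You are also right to flag that the even-weight assertion does not follow from the hypotheses as literally stated: for an arbitrary binary $[n,k,d]$ code the weights $w_i$ themselves reappear in ${\overline{\mathcal{C}^{\bot}}}^{\bot}$, so one genuinely needs $\mathcal{C}$ to be an even-weight code of odd length, and your observation that both $\mathcal{C}_1$ and $\mathcal{C}_2$ here have length $n=2^m-1$ with $x+1$ dividing the generator polynomial (equivalently, all weights in Tables 4--5 and in the proof of Theorem 4 are even) supplies exactly the missing hypothesis for every application made in this paper. In short, your proof is sound and, if anything, more careful than the source it replaces.
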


The following  Pless power moments given in~\cite{HP03} are  notable variations  of the MacWilliams identities, which is a fundamental result about weight distributions and is a set of linear relations between the weight distributions of  $\mathcal{C}$ and $\mathcal{C}^{\bot}$.

\begin{lemma}\label{power moment identities}
Let $A_i$ and $A_i^{\bot}$ denote the number of code vectors of weight $i$ in a code $\mathcal{C}$ and $\mathcal{C}^{\bot}$, respectively. If
$A_i^{\bot}=0$ for $0 \le i \le 6,$ then the first seven Pless  power moment identities hold:
\begin{eqnarray*}
&&\sum A_i=2^k,\\
&&\sum iA_i=2^{k-1}n,\\
&&\sum i^2A_i=2^{k-2}n(n+1),\\
&&\sum i^3A_i=2^{k-3}(n^3+3n^2),\\
&&\sum i^4A_i=2^{k-4}(n^4+6n^3+3n^2-2n),\\
&&\sum i^5A_i=2^{k-5}(n^5+10n^4+15n^3-10n^2),\\
&&\sum i^6A_i=2^{k-6}(n^6+15n^5+45n^4-15n^3-30n^2+16n),
\end{eqnarray*}
where $k$ denotes the number of information digits.
\end{lemma}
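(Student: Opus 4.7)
The plan is to derive these identities as direct consequences of the MacWilliams identity. For a binary $[n,k]$ code $\mathcal{C}$, MacWilliams in one variable reads
\[
\sum_{j=0}^{n} A_j^{\bot}\, y^j \;=\; 2^{-k}\sum_{i=0}^{n} A_i\, (1-y)^i(1+y)^{n-i},
\]
so that
\[
\sum_{i=0}^{n} A_i\, (1-y)^i(1+y)^{n-i} \;=\; 2^k\sum_{j=0}^{n} A_j^{\bot}\, y^j.
\]
Since $A_0^{\bot}=1$ and, by hypothesis, $A_j^{\bot}=0$ for $1\le j\le 6$, the right-hand side is $2^k+2^k\sum_{j\ge 7}A_j^{\bot}\, y^j$; in particular, its coefficient of $y^r$ equals $2^k\delta_{r,0}$ for every $0\le r\le 6$.

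Next I would expand the left-hand side as a formal power series,
\[
(1-y)^i(1+y)^{n-i} \;=\; \sum_{r\ge 0} P_r(i,n)\, y^r,
\qquad
P_r(i,n)=\sum_{a+b=r}(-1)^a\binom{i}{a}\binom{n-i}{b},
\]
so that $P_r(i,n)$ is (up to sign) the Krawtchouk polynomial $K_r(i)$, a polynomial in $i$ of degree exactly $r$ whose coefficients are polynomials in $n$. Equating the coefficients of $y^r$ then gives, for $0\le r\le 6$,
\[
\sum_{i=0}^{n} A_i\, P_r(i,n) \;=\; 2^k\,\delta_{r,0}.
\]
Rewriting $P_r(i,n)$ in the monomial basis $1,i,i^2,\dots,i^r$ turns this into an upper-triangular linear system in the power moments $S_r:=\sum_i i^rA_i$, which I would solve recursively: the $r=0$ equation yields $S_0=2^k$; substituting into the $r=1$ equation yields $S_1=2^{k-1}n$; substituting $S_0,S_1$ into the $r=2$ equation yields $S_2=2^{k-2}n(n+1)$; and so on up to $r=6$, producing the seven displayed identities.

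The only real obstacle is computational, namely the bookkeeping needed to expand $P_r(i,n)$ for $r=5,6$: one must carefully collect the alternating sums $\sum_{a+b=r}(-1)^a\binom{i}{a}\binom{n-i}{b}$, express them in powers of $i$, and then back-substitute the previously computed $S_0,\dots,S_{r-1}$. No new combinatorial idea is required beyond MacWilliams and the triangularity of the Krawtchouk basis. As a sanity check, one may verify small cases (for instance $r=2,3$) directly against the formulas, and the general pattern then follows by induction on $r$.
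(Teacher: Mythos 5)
Your derivation is correct and is essentially the standard one: the paper gives no proof of this lemma, citing Huffman--Pless, where the power moments are obtained exactly as you propose --- by equating coefficients of $y^r$ in the one-variable MacWilliams identity and inverting the triangular change of basis from the Krawtchouk polynomials $K_r(i)$ to the monomials $1,i,\dots,i^r$. Note only that the hypothesis as printed ($A_i^{\bot}=0$ for $0\le i\le 6$) must be read as $1\le i\le 6$, since $A_0^{\bot}=1$ for every code; you have implicitly (and correctly) made this adjustment when you set the coefficient of $y^r$ equal to $2^k\delta_{r,0}$.
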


The following lemma given by Luo, Tang and Wang~\cite{LTW10}, gives the weight distributions of the cyclic codes related to the generalized Kasami case.
\begin{lemma}\label{Weight distribution}
The weight distributions of $\mathcal{C}_2$ are given in Tables \ref{4} and \ref{5}.

\begin{table}
\begin{center}
\caption{The weight distribution of ${\mathcal{C}_2}$ when $d'=d$}\label{4}
\begin{tabular}{ll}
\hline\noalign{\smallskip}
Weight  &  Multiplicity   \\
\noalign{\smallskip}
\hline\noalign{\smallskip}
$0$  &  1 \\
$2^{2s-1}-2^{s-1}$  &  $ 2^{s-1}(2^{2s}-1)(2^{2(s+d)}-2^{2s+d}-2^{2s}+2^{s+2d}-2^{s+d}+2^{2d})/(2^{2d}-1)$    \\
$2^{2s-1}+2^{s-1}$  &  $ 2^{s-1}(2^s-1)^2(2^{2(s+d)}-2^{2s+d}-2^{2s}+2^{s+2d}-2^{s+d}+2^{2d})/(2^{2d}-1)$    \\
$ 2^{2s-1}-2^{s+d-1}$  &  $ 2^{s-d-1}(2^{s+d}-1)(2^{2s}-1)(2^{s-d}+1)/(2^{2d}-1)$\\
$ 2^{2s-1}+2^{s+d-1}$  &  $ 2^{s-d-1}(2^{s+d}-1)(2^{2s}-1)(2^{s-d}-1)/(2^{2d}-1)$\\
$ 2^{2s-1}$  &  $ (2^{3s-d}-2^{2(s-d)}+1)(2^{2s}-1)$     \\
\noalign{\smallskip}
\hline
\end{tabular}
\end{center}
\end{table}
\end{lemma}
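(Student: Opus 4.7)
The statement is a citation from Luo, Tang, and Wang~\cite{LTW10}, so in the paper the ``proof'' is just an invocation. Still, let me sketch what a direct derivation would look like. The plan is to compute the weight distribution of $\mathcal{C}_2$ via the standard exponential-sum recipe for cyclic codes with several zeros. A generic codeword of $\mathcal{C}_2$ can be written coordinatewise as
\[
 c_x(a,b,c)\;=\;\tr_1^s(a x^{2^s+1})\;+\;\tr_1^m(b x^{2^l+1}+cx),\qquad x\in\mathbb{F}_{2^m}^*,
\]
for $a\in\mathbb{F}_{2^s}$ and $b,c\in\mathbb{F}_{2^m}$ (this is well-defined because $x^{2^s+1}\in\mathbb{F}_{2^s}$). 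Then the Hamming weight is $wt(c) = (2^m-T(a,b,c))/2$, where
\[
 T(a,b,c)\;=\;\sum_{x\in\mathbb{F}_{2^m}}(-1)^{\,\tr_1^s(ax^{2^s+1})+\tr_1^m(bx^{2^l+1}+cx)}.
\]

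Next I would observe that $T(a,b,c)$ is a quadratic-form character sum: the exponent is an $\mathbb{F}_2$-quadratic form $Q_{a,b}(x)$ perturbed by the $\mathbb{F}_2$-linear term $\tr_1^m(cx)$. Standard theory then gives $T(a,b,c)\in\{0,\pm 2^{(m+r)/2}\}$ where $r$ is the rank of $Q_{a,b}$ over $\mathbb{F}_2$, provided $c$ is orthogonal (in the relevant sense) to the radical; otherwise the contribution from $c$ vanishes. So the frequency of each weight reduces to counting the pairs $(a,b)\in\mathbb{F}_{2^s}\times\mathbb{F}_{2^m}$ for which $Q_{a,b}$ has a given rank and type (hyperbolic vs.\ elliptic), then multiplying by the number of admissible $c$'s for each such pair.

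The main obstacle is this rank-and-type distribution for the family $Q_{a,b}$. The associated symmetric bilinear form is
\[
 B_{a,b}(x,y)\;=\;\tr_1^s\!\bigl(a(x^{2^s}y+xy^{2^s})\bigr)+\tr_1^m\!\bigl(b(x^{2^l}y+xy^{2^l})\bigr),
\]
and determining its radical amounts to solving a linearized equation in $x$ whose coefficients mix the Frobenius powers $2^l,2^{2l},2^s,2^{s+l}$. Here the hypotheses $d=\gcd(s,l)$ and $d'=\gcd(s+l,2l)$ enter: the dimension of the radical is governed by which intermediate subfields $\mathbb{F}_{2^d}$, $\mathbb{F}_{2^{d'}}$ the solutions fall into, and the split $d'=d$ vs.\ $d'=2d$ is exactly what produces Table~\ref{4} versus Table~\ref{5}. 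I would follow~\cite{LTW10} through this bilinear-form analysis case-by-case rather than reproving it, then assemble the frequencies by elementary combinatorics and confirm the totals sum to $2^{5m/2}$ using the first few Pless moments of Lemma~\ref{power moment identities} as a sanity check.
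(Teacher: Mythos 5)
You correctly identify that the paper offers no proof of this lemma---it is quoted verbatim from Luo, Tang and Wang~\cite{LTW10}---and your sketch of the underlying derivation (reducing weights to the exponential sum $T(a,b,c)$, analyzing the rank and type of the quadratic form $Q_{a,b}$ via its radical, with the cases $d'=d$ and $d'=2d$ separating the two tables) accurately reflects how the cited source establishes the result. This matches the paper's treatment, so there is nothing to correct.
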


\begin{table}
\begin{center}
\caption{The weight distribution of ${\mathcal{C}_2}$ when $d'=2d$}\label{5}
\begin{tabular}{ll}
\hline\noalign{\smallskip}
Weight  &  Multiplicity   \\
\noalign{\smallskip}
\hline\noalign{\smallskip}
$0$  &  1 \\
$2^{2s-1}-2^{s-1}$  &  $ \frac{2^{s+3d-1}(2^{2s}-1)(2^{2s}-2^{2(s-d)}-2^{2s-3d}+2^s-2^{s-d}+1)}{(2^{2d}-1)(2^d+1)}$    \\
$2^{2s-1}+2^{s-1}$  &  $ \frac{2^{2s+3d-1}(2^s-1)^2(2^{2s}-2^{2(s-d)}-2^{2s-3d}+2^s-2^{s-d}+1)}{(2^{2d}-1)(2^d+1)}$    \\
$ 2^{2s-1}-2^{s+d-1}$  &  $ 2^{s-1}(2^{2s}-1)(2^s+2^{s-d}+2^{s-2d}+1)(2^{s-d}+1)/(2^d+1)^2$\\
$ 2^{2s-1}+2^{s+d-1}$  &  $ 2^{s-1}(2^{2s}-1)(2^s+2^{s-d}+2^{s-2d}+1)(2^{s-d}-1)/(2^d+1)^2$\\
$ 2^{2s-1}$  &  $ (2^{2s}-1)(2^{3s-d}-2^{3s-2d}+2^{3s-3d}-2^{3s-4d}+2^{3s-5d}$\\
&$+2^{2s-d}-2^{2s-2d+1}+2^{2s-3d}-2^{2s-4d}+1)$     \\
$ 2^{2s-1}-2^{s+2d-1}$  &  $ 2^{s-2d-1}(2^{s-d}-1)(2^{2s}-1)(2^{s-2d}+1)/(2^d+1)(2^{2d}-1)$\\
$ 2^{2s-1}+2^{s+2d-1}$  &  $ 2^{s-2d-1}(2^{s-d}-1)(2^{2s}-1)(2^{s-2d}-1)/(2^d+1)(2^{2d}-1)$\\
\noalign{\smallskip}
\hline
\end{tabular}
\end{center}
\end{table}

\subsection{Quadratic forms}

To determine the parameters of codes ${\overline{{\mathcal{C}_1}^{\bot}}}^{\bot}$ defined in Eq.(\ref{code-1}), we introduce the following function.
\begin{equation}\label{S(a.b.c)}
S(a,b,c)=\sum\limits_{x \in \mathbb{F}_q}(-1)^{\tr_1^m(ax^5+bx^3+cx)},\quad a,b,c \in \mathbb{F}_q.\\
\end{equation}
The first tool to determine the values of exponential sums $S(a,b,c)$ is quadratic forms over $\mathbb{F}_2$.
Let $H$ be an $m\times m$
 matrix over $\mathbb{F}_2$. For the quadratic form
\begin{equation}\label{F(x)}
F: \mathbb{F}^m_2\rightarrow \mathbb{F}_2,\quad F(X)=XHX^T \quad (X=(x_1, x_2, \ldots, x_m)\in \mathbb{F}^m_2),
\end{equation}
we define $r_F$ of $F$ to be the rank of $H+H^T$ over $\mathbb{F}_2$.

The field $\mathbb{F}_q$ is a vector space over $\mathbb{F}_2$ with dimension $m$. We fix a basis $v_1, v_2, \ldots, v_m$ of $\mathbb{F}_q$ over
$\mathbb{F}_2$. Thus each $x\in \mathbb{F}_q$ can be uniquely expressed as
$$x=x_1v_1+x_2v_2+\ldots+x_mv_m \quad (x_i\in \mathbb{F}_2).$$
Then we have the following $\mathbb{F}_2$-linear isomorphism $\mathbb{F}_q \rightarrow\mathbb{F}^m_2:$
$$ \quad x=x_1v_1+x_2v_2+\ldots+x_mv_m\mapsto X=(x_1, \ldots, x_m).$$
With the isomorphism, a function $f: \mathbb{F}_q\rightarrow\mathbb{F}_2$ induces a function $F:\mathbb{F}^m_2\rightarrow\mathbb{F}_2$ where for all
$X=(x_1, \ldots, x_m)\in \mathbb{F}^m_2, F(X)=f(x)$ where $x=x_1v_1+x_2v_2+\ldots+x_mv_m.$
In this way, the function $f(x) = \tr_1^m(wx)$ for $w\in \mathbb{F}_q$ induces a linear form
 $$F(X)=\sum^m_{i=1} \tr_1^m( w v_i)x_i=A_w X^T,$$
where $A_w=(\tr_1^m(w v_1), \ldots, \tr_1^m(w v_m)).$

For $(a, b, c)\in \mathbb{F}^3_q$,  to determine the value of
$$S(a,b,c)=\sum\limits_{x \in \mathbb{F}_q}(-1)^{\tr_1^m(ax^5+bx^3+cx)}=\sum\limits_{X \in \mathbb{F}^m_2}(-1)^{XH_{a,b}X^T+A_cX^T},$$
where $XH_{a,b}X^T$ is the  quadratic form derived from
$f_{a,b}(x) = \tr_1^m (ax^5+bx^3)$ for $a, b\in\mathbb{F}_q$.  We need to determine the rank of $H_{a,b}$ over $\mathbb{F}_2.$ To this end, we have the following
result.

\begin{lemma}\label{rank}
  For $(a, b)\in \mathbb{F}^2_q/{\{(0,0)\}},$ let $r_{a,b}$ be the rank of $H_{a,b}$. Then $r_{a,b}=m$, $m-2$, or $m-4$.
\end{lemma}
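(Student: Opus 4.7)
The plan is to identify the radical of the quadratic form associated to $f_{a,b}(x) = \tr_1^m(ax^5 + bx^3)$ with the $\mathbb{F}_2$-kernel of an explicit linearized polynomial in $y$, bound its dimension by the degree of that polynomial, and then invoke the parity of ranks of alternating forms to eliminate the odd possibilities.

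First I would compute the polarization $B_{a,b}(x,y) := f_{a,b}(x+y) + f_{a,b}(x) + f_{a,b}(y)$. In characteristic $2$ one has $(x+y)^3 + x^3 + y^3 = x^2 y + xy^2$ and $(x+y)^5 + x^5 + y^5 = x^4 y + xy^4$, so
$$B_{a,b}(x,y) = \tr_1^m(ax^4 y + axy^4 + bx^2 y + bxy^2).$$
Using the Frobenius invariance $\tr_1^m(u) = \tr_1^m(u^{2^i})$ to push every power of $x$ down to the first power, this becomes $B_{a,b}(x,y) = \tr_1^m\bigl(x\cdot L_{a,b}(y)\bigr)$ with
$$L_{a,b}(y) = a y^4 + b y^2 + b^{2^{m-1}} y^{2^{m-1}} + a^{2^{m-2}} y^{2^{m-2}}.$$
Because the trace pairing is non-degenerate on $\mathbb{F}_q$ over $\mathbb{F}_2$, the radical $\{y : B_{a,b}(x,y)=0 \ \forall x\}$ is exactly $\ker L_{a,b}$. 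Raising to the $4$-th power (a bijection on $\mathbb{F}_q$) and using $u^{2^m}=u$ transforms $L_{a,b}(y)=0$ into the ordinary polynomial equation
$$a^4 y^{16} + b^4 y^8 + b^2 y^2 + a y = 0$$
of degree at most $16$. Hence the radical has at most $16$ elements, and as an $\mathbb{F}_2$-subspace has dimension at most $4$; equivalently, $r_{a,b} \ge m-4$.

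Finally, parity closes the argument. Since $B_{a,b}(x,x) = 2\tr_1^m(ax^5+bx^3) = 0$ in characteristic $2$, the matrix $H_{a,b}+H_{a,b}^T$ is symmetric with zero diagonal, i.e.\ alternating, and alternating matrices over $\mathbb{F}_2$ have even rank. Combined with the even integer $m=2s$ and the bounds $m-4 \le r_{a,b} \le m$, the only admissible values are $r_{a,b} \in \{m, m-2, m-4\}$. The only slightly delicate step is the Frobenius-based passage from $B_{a,b}$ to $L_{a,b}$ and the subsequent degree reduction to $16$; once this is carried out cleanly, the parity observation finishes the proof with essentially no extra work.
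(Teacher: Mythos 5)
Your proposal is correct and follows essentially the same route as the paper's proof: both identify the radical of the quadratic form with the $\mathbb{F}_2$-kernel of the linearized polynomial $ay^4+by^2+a^{2^{m-2}}y^{2^{m-2}}+b^{2^{m-1}}y^{2^{m-1}}$, raise to the fourth power to obtain $a^4y^{16}+b^4y^8+b^2y^2+ay$ and thereby bound the kernel dimension by $4$, and then use evenness of the rank to conclude $r_{a,b}\in\{m,m-2,m-4\}$. Your write-up is in fact slightly more careful than the paper's, since you justify the nondegeneracy of the trace pairing and the parity of the rank explicitly.
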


\begin{proof}
  It is well known that the rank  of the quadratic form $F(X)$ is defined as the codimension of the $ F_2$ -vector space
$$V = \{y\in  \mathbb{F}_{q}:~ f(x+y)-f(x)-f(y) = 0 \mathrm{~for~ all~} x\in  \mathbb{F}_{q} \}.$$
The cardinality of $V$ is $|V| = 2^{m-r_F}$, where $r_F$ is the rank of $f(x).$

The definition of the function $f_{a,b}(x)$ leads to
$$
f(x+y)-f_{a,b}(x)-f(y) = \tr_1^m ((ax^4+bx^2+a^{2^{m-2}}x^{2^{m-2}}+b^{2^{m-1}}x^{2^{m-1}})y).
$$
Let $$\Phi_{(a,b)}(x)=ax^4+bx^2+a^{2^{m-2}}x^{2^{m-2}}+b^{2^{m-1}}x^{2^{m-1}}.$$
 Then $r_{a,b}=r$ if and only if $\Phi_{(a,b)}(x)=0$ has $2^{m-r_{a,b}}$ solutions in $\mathbb{F}_q.$
On the other hand, since $\Phi_{(a,b)}(x)$ is a $2$-linearized polynomial, then the set of the zeros  to $\Phi_{(a,b)}(x)=0$ is equivalent to that of
$$a^4x^{16}+b^4x^8+b^2x^2+ax=0$$ in
$\mathbb{F}_q$ and forms an $\mathbb{F}_2$-vector space. Since  $r_{a,b}$ is even, $r_{a,b}=m, m-2, m-4.$
We then complete the proof. 
\end{proof}
%\hfill$\square$

The following result, which was proved in~\cite{LN97}, will be used in Section~\ref{section-4}.
\begin{lemma}\label{value}~\cite{LN97}
For the fixed quadratic form defined in (\ref{F(x)}), the value distribution of \\
 $\sum\limits_{X \in \mathbb{F}^m_2}(-1)^{F(X)+A_cX^T}$ when $A_c$ runs
through $\mathbb{F}^m_2$, is $0,$ $2^{m-\frac{r_F}{2}},$ or $-2^{m-\frac{r_F}{2}}$.
\end{lemma}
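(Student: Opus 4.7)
The plan is to compute the character sum $T(A_c) := \sum_{X \in \F_2^m} (-1)^{F(X) + A_c X^T}$ by splitting $\F_2^m$ into the radical of the associated bilinear form and a nondegenerate complement, and then invoking the classical Walsh-transform formula for a nondegenerate quadratic form over $\F_2$. First I would introduce the symmetric bilinear form $B(X, Y) := F(X + Y) + F(X) + F(Y)$, whose matrix is $H + H^T$ and whose rank is therefore $r_F$. Its radical $V := \{Y \in \F_2^m : B(X, Y) = 0 \text{ for all } X \in \F_2^m\}$ has dimension $m - r_F$, and the polarization identity $F(X + Y) = F(X) + F(Y) + B(X, Y)$ forces $F|_V$ to be $\F_2$-linear. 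Fixing any complement $W$ with $\F_2^m = V \oplus W$, the restriction $F|_W$ is a nondegenerate quadratic form on an $r_F$-dimensional space.

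Since $B(v, w) = 0$ whenever $v \in V$ and $w \in W$, writing $X = v + w$ gives $F(X) = F(v) + F(w)$, and the sum factors as
\[
T(A_c) = \Bigl( \sum_{v \in V} (-1)^{F(v) + A_c v^T} \Bigr) \cdot \Bigl( \sum_{w \in W} (-1)^{F(w) + A_c w^T} \Bigr).
\]
The first factor is the $\F_2$-Fourier sum of the linear form $v \mapsto F(v) + A_c v^T$ on $V$; it equals $|V| = 2^{m - r_F}$ when that form vanishes identically, and $0$ otherwise. For the second factor I would appeal to the classification of nondegenerate quadratic forms over $\F_2$ in even dimension $r_F$: every such form is linearly equivalent to either a hyperbolic form $\sum_{i=1}^{r_F/2} x_{2i-1} x_{2i}$ or to the unique elliptic form, and in both cases the Walsh transform takes only the values $\pm 2^{r_F/2}$ as $A_c$ varies. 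Multiplying the two factors yields $0$ or $\pm 2^{(m - r_F) + r_F/2} = \pm 2^{m - r_F/2}$, giving exactly the three values asserted.

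The main obstacle in a self-contained presentation is the Walsh-transform computation for a nondegenerate quadratic form on $\F_2^{2k}$. One standard route is to diagonalize the associated alternating form into $k = r_F/2$ hyperbolic planes, reduce $F$ on each plane to one of the two normal forms by absorbing linear shifts via a change of variables, and evaluate a four-term sum on each plane; the plane contributions multiply to $\pm 2^k$. Because this fact is classical (the paper quotes it from \cite{LN97}), one need not reproduce the computation, and the lemma follows from the elementary factorization above together with the linear-form analysis on $V$.
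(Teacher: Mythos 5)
Your proposal is correct, but there is nothing in the paper to compare it against: the paper does not prove Lemma~\ref{value} at all, it simply imports the statement from the cited reference \cite{LN97}. Your argument is the standard textbook derivation and it holds up. The factorization over $V\oplus W$ is legitimate because $B(v,w)=X(H+H^T)Y^T$ vanishes whenever $v$ lies in the radical, so $F(v+w)=F(v)+F(w)$ and the exponent splits; the sum over $V$ is a character sum of the linear functional $v\mapsto F(v)+A_cv^T$ (linearity of $F|_V$ following from the polarization identity), hence equals $|V|=2^{m-r_F}$ or $0$; and the sum over $W$ is the Walsh transform of a quadratic form with nondegenerate polar form, which takes only the values $\pm 2^{r_F/2}$ by the hyperbolic/elliptic classification (your four-term computation on each hyperbolic plane is the right way to verify this). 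Two small points worth making explicit if you write this up: $r_F$ is automatically even because $H+H^T$ is alternating over $\mathbb{F}_2$ (the paper relies on this same fact in Lemma~\ref{rank}), and the lemma as stated only claims that these three values are the possible ones, so you need not address which of them are actually attained for a given $F$.
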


\subsection{Proofs of the main results}\label{section-4}

Now we are ready to give the proofs of our main results.  We begin this subsection by  proving the weight distribution of the code ${\overline{{\mathcal{C}_1}^{\bot}}}^{\bot}$ given in Theorem  \ref{weight1}.

 %{\bf Proof of Theorem~\ref{weight1}}
 \begin{proof}[Proof of Theorem~\ref{weight1}]

For each nonzero codeword $\mathbf{c}(a, b, c)=(c_0, \ldots, c_n)$ in $\mathcal{C}_1,$ the Hamming weight of $\mathbf{c}(a, b, c)$ is
\begin{eqnarray}\label{weight formula-1}
w_H(\mathbf{c}(a,b,c))&=&|\{i:0\leq i\leq n-1, c_i\neq 0\}| \nonumber\\
&=&n-|\{i:0\leq i\leq n-1, c_i= 0\}| \nonumber\\
&=&n-\frac{1}{2}\sum^{n-1}_{i=0}\sum^1_{y=0}(-1)^{y\cdot \tr_1^m (a\alpha^{5i}+b\alpha^{3i}+c\alpha^i)}\nonumber\\
&=&n-\frac{n}{2}-\frac{1}{2}\sum_{x\in \mathbb{F}^*_q}(-1)^{\tr_1^m (ax^5+bx^3+cx )} \nonumber\\
&=&\frac{n}{2}+\frac{1}{2}-\frac{1}{2}S(a,b,c) \nonumber\\
&=&2^{2s-1}-\frac{1}{2}S(a,b,c).
\end{eqnarray}
 By Lemmas \ref{rank}-\ref{value} and (\ref{weight formula-1}), we have that the Hamming weight of $\mathbf{c}(a,
b, c)$ is
$$2^{2s-1}, 2^{2s-1}-2^{s-1}, 2^{2s-1}+2^{s-1}, 2^{2s-1}-2^s, 2^{2s-1}+2^s, 2^{2s-1}-2^{s+1}, 2^{2s-1}+2^{s+1}.$$
Plugging these values to the Pless power moments  given by Lemma~\ref{power moment
identities} and after tedious calculations, we obtain
\begin{eqnarray*}
&&A_{2^{2s-1}}=29\times2^{6s-6}-33\times2^{4s-6}+17\times2^{2s-4}-1,\\
&&A_{2^{2s-1}-2^{s-1}}=\frac{1}{15}(3\times2^{6s}+3\times2^{5s}+5\times2^{4s}+5\times2^{3s}-2^{2s+3}-2^{s+3}),\\
&&A_{2^{2s-1}+2^{s-1}}=\frac{1}{15}(3\times2^{6s}-3\times2^{5s}+5\times2^{4s}-5\times2^{3s}-2^{2s+3}+2^{s+3}),\\
&&A_{2^{2s-1}-2^s}=\frac{7}{3}\times2^{3s-4}(2^{3s-1}+2^{2s}-2^{s-1}-1),\\
&&A_{2^{2s-1}+2^s}=\frac{7}{3}\times2^{3s-4}(2^{3s-1}-2^{2s}-2^{s-1}+1),\\
&&A_{2^{2s-1}-2^{s+1}}=\frac{1}{15}\times2^{s-3}(2^{5s-4}+2^{4s-2}-5\times2^{3s-4}-5\times2^{2s-2}+2^{s-2}+1),\\
&&A_{2^{2s-1}+2^{s+1}}=\frac{1}{15}\times2^{s-3}(2^{5s-4}-2^{4s-2}-5\times2^{3s-4}+5\times2^{2s-2}+2^{s-2}-1).
\end{eqnarray*}
The desired conclusion then follows from Lemma \ref{relation}. Thus the proof is completed.
\end{proof}

Then  we prove the affine-invariance  of the code ${\overline{{\mathcal{C}_1}^{\bot}}}^{\bot}$.

%Below we derive $2$-designs for the codes we constructed in Section \ref{theorem}.
\begin{lemma}\label{affine invariant}
  The extended codes $\overline{{\mathcal{C}_1}^{\bot}}$ and $\overline{{\mathcal{C}_2}^{\bot}}$ are affine-invariant.
\end{lemma}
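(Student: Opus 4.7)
The plan is to apply the Kasami--Lin--Peterson criterion (Lemma~\ref{Kasami-Lin-Peterson}): identify the defining set $\overline{T_i^{\bot}}\subseteq\mathcal{P}$ of each extended code and verify it is $\preceq$-closed. Reading the dual descriptions in~(\ref{code-1}) and~(\ref{code-2}), $\overline{{\mathcal{C}_1}^{\bot}}^{\bot}$ is the $\mathbb{F}_2$-span of the all-ones vector and of the functions $x\mapsto\tr_1^m(vx^j)$ for $v\in\mathbb{F}_q$ and $j\in\{1,3,5\}$, while $\overline{{\mathcal{C}_2}^{\bot}}^{\bot}$ is the analogous span with $j\in\{1,2^l+1,2^s+1\}$ (the $x^{2^s+1}$ term uses $\tr_1^s$ with $a\in\mathbb{F}_{2^s}$, since $x^{2^s+1}\in\mathbb{F}_{2^s}$ for every $x\in\mathbb{F}_q$). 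A routine dualization, using $\sum_x d_x\tr_1^m(vx^j)=\tr_1^m(v\hat d_j)$ with $\hat d_j:=\sum_{x\in\mathbb{F}_q}d_xx^j$, the Frobenius identity $\hat d_{2j}=\hat d_j^2$ (valid since $d_x\in\mathbb{F}_2$), and nondegeneracy of the trace form, then shows that the defining set of $\overline{{\mathcal{C}_i}^{\bot}}$ in the sense of Lemma~\ref{Kasami-Lin-Peterson} coincides with the monomial exponent support of its dual: $\overline{T_1^{\bot}}=\{0\}\cup C_1\cup C_3\cup C_5$ and $\overline{T_2^{\bot}}=\{0\}\cup C_1\cup C_{2^l+1}\cup C_{2^s+1}$.

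The $\preceq$-closure then follows from a Hamming-weight argument. Multiplication by $2$ modulo $2^m-1$ is a cyclic rotation of the $m$-bit binary expansion, so each coset $C_j$ is constant in binary weight; hence every element of $C_1$ has weight $1$, and every element of $C_3,C_5,C_{2^l+1},C_{2^s+1}$ has weight $2$ (since $3,5,2^l+1,2^s+1$ each have exactly two $1$-bits). Given $e\in\overline{T_i^{\bot}}$ and $r\preceq e$ in $\mathcal{P}$, $\mathrm{wt}(r)\leq\mathrm{wt}(e)\leq 2$: if $\mathrm{wt}(r)=0$ then $r=0\in\overline{T_i^{\bot}}$; if $\mathrm{wt}(r)=1$ then $r=2^i\in C_1\subseteq\overline{T_i^{\bot}}$; and if $\mathrm{wt}(r)=2$ then $r\preceq e$ with equal weight forces $r=e$. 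In each case $r\in\overline{T_i^{\bot}}$, so Lemma~\ref{Kasami-Lin-Peterson} gives the affine invariance of $\overline{{\mathcal{C}_1}^{\bot}}$ and $\overline{{\mathcal{C}_2}^{\bot}}$.

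I expect the main obstacle to lie in the first step: the naive cyclic-code duality would produce $\{0\}\cup C_{-1}\cup C_{-3}\cup C_{-5}$, whose elements have weights $m-1$ and $m-2$ and which visibly fails $\preceq$-closure. One must therefore extract the defining set from the trace description of the dual code as above; once that identification is made, the weight count in Step~2 is essentially automatic.
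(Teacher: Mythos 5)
Your proof is correct and follows essentially the same route as the paper's: identify the defining set of each extended code as $\{0\}\cup C_1\cup C_3\cup C_5$ (resp.\ $\{0\}\cup C_1\cup C_{2^l+1}\cup C_{2^s+1}$), verify $\preceq$-closure via the binary weights $1$ and $2$ of the coset elements, and invoke Lemma~\ref{Kasami-Lin-Peterson}. The only difference is that you explicitly derive the defining-set identification from the trace description in~(\ref{code-1}) and~(\ref{code-2}) (and flag the $C_{-j}$ pitfall), whereas the paper simply asserts it; this is a useful clarification but not a different method.
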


%\emph{Proof.} 
\begin{proof}
We will prove the conclusion with Lemma \ref{Kasami-Lin-Peterson}.
The defining set $T$ of the cyclic code ${\mathcal{C}_1}^{\bot}$ is $T =C_1 \cup C_3 \cup C_5$.  Since $0 \not \in T$, the defining set $\overline{T}$ of
$\overline{{\mathcal{C}_1}^{\bot}}$ is given by $\overline{T} = C_1 \cup C_3 \cup C_5 \cup \{0\}$.
Let $e \in \overline{T} $ and $r \in \mathcal{P}$ . Assume that $e \preceq s$. We need to prove that $r \in \overline{T}$ by Lemma~\ref{Kasami-Lin-Peterson}.

If $r=0,$ then obviously $r\in \overline{T}$. Consider now the case $r>0$. If $e \in  C_1$, then the Hamming weight $wt(e) = 1.$ Since $r \preceq e$,
$wt(r)
= 1.$ Consequently, $r \in C_1\subset  \overline{T}.$  If $e  \in C_3\cup C_5$, then the Hamming weight $wt(e) = 2.$ Since $r \preceq e$, either $wt(r) =
1$
or $r = e.$ In both cases, $r \in  \overline{T}.$ The desired conclusion then follows
from Lemma \ref{Kasami-Lin-Peterson}.

Similarly, we can prove that $\overline{{\mathcal{C}_2}^{\bot}}$  is affine-invariant.

Thus we complete the proof. 
\end{proof}
%\hfill $\square$

%{\bf Proof of Theorem~\ref{$2-$design-1}}
\begin{proof}[Proof of Theorem~\ref{$2-$design-1}]
From the relation  of ${\overline{{\mathcal{C}_1}^{\bot}}}^{\bot}$ and $\overline{{\mathcal{C}_1}^{\bot}}$, by Lemmas \ref{The dual of an affine-invariant
code} and \ref{affine invariant} , we have  ${\overline{{\mathcal{C}_1}^{\bot}}}^{\bot}$ is  affine-invariant.
Then ${\overline{{\mathcal{C}_1}^{\bot}}}^{\bot}$ holds $2$-designs by  Theorem \ref{2-design}.

Moreover, the number of supports of all codewords with weight $i\neq 0$ in the code ${\overline{{\mathcal{C}_1}^{\bot}}}^{\bot}$ is equal to
${\overline{{A_i}^{\bot}}}^{\bot}$ for each $i,$ where ${\overline{{A_i}^{\bot}}}^{\bot}$ is given in Table \ref{1}. Then the desired conclusions follow
from Eq.(\ref{condition}).
Thus, we finish the proof of Theorems \ref{$2-$design-1}.  
\end{proof}
%\hfill$\square$

From all the above, we have finished the proof of the results related to  ${\overline{{\mathcal{C}_1}^{\bot}}}^{\bot}$. Now we prove Theorems
\ref{weight2}
and \ref{$2-$design-2} related to ${\overline{{\mathcal{C}_2}^{\bot}}}^{\bot}$.

%\textbf{Proof of Theorem \ref{weight2}.} 
\begin{proof}[Proof of Theorem \ref{weight2}]
The desired conclusion follows directly from Lemmas \ref{relation} and \ref{Weight distribution}.
\end{proof}
%\hfill$\square$

%\textbf{Proof of Theorem \ref{$2-$design-2}.} 
\begin{proof}[Proof of Theorem~\ref{$2-$design-2}]
The proof is similar to that of Theorem \ref{$2-$design-1}, thus  is omitted here.
\end{proof}
%\hfill$\square$

\section{Conclusion}\label{section-5}
%We first mention some applications of $2$-designs.

%Let $\mathcal{P}$ be an abelian group of order $v$ under a binary operation denoted by $+$. Let $\mathcal{B} =\{B_1, B_2, \ldots, B_b\},$
%where all $B_i$'s are $k$-subsets of $\mathcal{P}$ and $k$ is a positive integer. We define $\Delta(B_i)$ to be the multiset $\{x-y: x\in B_i, y \in B_i\}$.
%If every nonzero element of $\mathcal{P}$ appears exactly $\delta$ times in the multiset $\cup^b_{i=1}\Delta(B_i),$ we call $\mathcal{B}$ a $(v, k,
%\delta)$  {\em difference family} in $(\mathcal{P}, +)$.

%With these notations, the following theorems given by ~\cite{DL17} are well-known.

%\begin{theorem}\label{difference family}~\cite{DL17}
 %$(\mathcal{P}, \mathcal{B})$ is a $2$-$(v, k, \lambda)$ design if and only if $\mathcal{B}$ is a $(v, k, \lambda v)$ difference family in $(\mathcal{P},
 %+)$.
%\end{theorem}

%\begin{theorem}\label{difference family-1}~\cite{DL17}
%Let $(\mathcal{P}, \mathcal{B})$ be a $t$-$(v, k, \lambda)$ design. If $t\geq 2,$ then $\mathcal{B}$ is a $(v, k, \delta)$ difference family in
%$\mathcal{P}$, where
%$$\delta=\frac{v\lambda {{v-2}\choose {t-2}}}{ {{k-2} \choose {t-2}}}.$$
%\end{theorem}

%Difference families have applications in the design and analysis of optical orthogonal codes, frequency hopping sequences, etc. By Theorems \ref{difference family} and \ref{difference family-1}, $t$-designs with $t\geq 2 $ have also applications in these areas. This shows the
%importance of $2$-designs in applications.

In this paper, we determined the weight distributions of two classes of binary cyclic codes. One is  derived from the triple-error correcting BCH code
and the other is from  cyclic codes related to the generalized Kasami case. We proved that both classes of linear codes hold $2$-designs
and explicitly computed their parameters. In particular, we  get  five $3$-designs in ${\overline{{\mathcal{C}_1}^{\bot}}}^{\bot}$ when $m=4.$

%\bibliographystyle{abbrv}

%\bibliography{designsbinary}

\end{document}